\newcommand{\real}{\mathbb{R}}
\newcommand{\dom}{\operatorname{dom}}
\newcommand{\goesto}{\rightarrow}
\newcommand{\rateit}[1]{\lambda^{\{#1\}}_{i \; (t)}}
\newcommand{\ubar}[1]{\underaccent{\bar}{#1}}
\newcommand{\utilde}[1]{\underaccent{\tilde}{#1}}
\newcommand{\comps}{\mathcal{C}}
\newtheorem{lem}{\textbf{Lemma \newline}}
\newtheorem{theorem}{\textbf{Theorem \newline}}
\newtheorem{cor}{\textbf{Corollary \newline}}
\newcommand{\oprocendsymbol}{\hbox{$\bullet$}}
\newcommand{\oprocend}{\relax\ifmmode\else\unskip\hfill\fi\oprocendsymbol}
\newenvironment{proof}{\noindent \textbf{Proof :\newline}}{ \oprocend}
{}
\newtheorem{remark}[theorem]{Remark}
\begin{document}
\title{Bounding Approximations for Compartmental Spreading Processes and Applications to Control}
\author{Nicholas J. Watkins, Cameron Nowzari,  Victor M. Preciado, and George J. Pappas\thanks{The authors are with the Department of Electrical and Systems Engineering, University of Pennsylvania, Pennsylvania, PA 19104, USA, {\tt\small \{nwatk,cnowzari,preciado,pappasg\}@upenn.edu}}}
      
\title{Deterministic Bounding Systems for Stochastic Compartmental Spreading Processes}      
      
\maketitle
\begin{abstract}
	This paper studies a novel approach for approximating the behavior of compartmental spreading processes.  In contrast to prior work, the methods developed describe a dynamics which \emph{bound} the exact moment dynamics, without explicitly requiring \emph{a priori} knowledge of non-negative (or non-positive) covariance between pairs of system variables.  Moreover, we provide systems which provide \emph{both} upper- and lower- bounds on the process moments.  We then show that when system variables are shown to be non-negatively (or non-positively) correlated for all time in the system's evolution, we may leverage the knowledge to create \emph{better} approximating systems.  We then apply the technique to several previously studied compartmental spreading processes, and compare the bounding systems' performance to the standard approximations studied in prior literature.
\end{abstract}

\section{Introduction}
\subsection{Motivation, Background, and Contributions}
Many systems of interest - viral epidemics, belief propagations, chemical reactions - can be modeled as continuous-time Markov processes with moments that evolve in exponentially large state spaces.  Due to their poor scaling properties, analysis of the exact moment dynamics is prohibitive; methods of approximation must be invoked so as to represent the systems in a space which grows tractably with the size of the problem instance.  Commonly, techniques referred to as \emph{moment closure} approximations are used.  

While  moment closure methods are popular and often claimed as accurate, analytic results are typically restricted to meager statements, e.g. demonstrating that the exact dynamics are approximated well within a neighborhood of the initial point.  Results guaranteeing that the moments of the process remain close to the moments given by the approximated system often rely on simulation, and must be taken with caution: we begin to address this problem in our work here.

This paper develops a method by which appropriate processes can be approximated by a \emph{bounding system} - one equipped with states which provably upper- and lower-bound the exact system's moments.  For purposes of concreteness, we focus on application to a general class of epidemic spreading processes: compartmental spreading models.  The model we study here subsumes many of the epidemic models studied in literature, and so we believe to be of immediate use to the community of researchers currently engaged in the area.

\paragraph*{Literature Review}
Moment closure techniques find wide use in the analysis of high-dimensional Markov processes, which are natural model choices in a variety of contexts.  In chemistry, reactions which are modeled stochastically via Markov processes give rise to the Chemical Master Equations (CMEs) \cite{gillespie1992rigorous}, of which approximations are commonly studied \cite{singh2011approximate}.  Similar models occur in biology, wherein populations are modeled stochastically \cite{singh2006moment}; in genetics, wherein protein creation and destruction are modeled stochastically \cite{soltani2014moment}; and in epidemiology, wherein the spread of disease is modeled stochastically \cite{sahneh2013generalized}.  

A variety of closure techniques have been used.  At root, they divide into two categories: those which force assumptions on the process variables - such as normality (see, e.g., \cite{soltani2014moment}), conditional normality (see, e.g., \cite{soltani2014moment}), and independence (see, e.g., \cite{soltani2014moment,van2009virus}) - and those which construct derivative-matching systems near the initial system state \cite{singh2011approximate}.  The analytic guarantees of accuracy for such methods are not appropriate for problems concerning control or network design.  Typical results only guarantee that the approximate system's states are bounded in error over a (possibly infinitesimal) interval after the initial time \cite{singh2011approximate}.  Approximation schemes exist which guarantee an error bound, such as the finite-state projection algorithm \cite{munsky2006finite,munsky2008finite}, but require the repetitive solution of a system of ordinary differential equations to execute and only provide guarantees \emph{after} the solution is computed, and so may be of limited use in control applications.  Moreover, to the authors' knowledge, none of these schemes provide guarantees as to the nature of the error: the approximate systems are not generally guaranteed to over-approximate or under-approximate the exact system's moments. 

The ability to claim a rigorous upper- or lower-bound on the evolution of the process moments is useful for control applications.  In particular, if our objective is to control an epidemic to extinction at a guaranteed rate, or ensure that a particular behavior survives for (or dies out within) a given amount of time, we must have known bounds on the moments of the process.  Moreover, in order to claim control of an approximate model (such as done in the recent works \cite{preciado2014optimal,CN-VMP-GJP:15-TCNS,watkins2015optimal}) implies control of the modeled process, we must have bounding guarantees - approximations alone do not suffice. 

In compartmental spreading models, an assumption of independence is often applied (see, e.g., \cite{sahneh2013generalized}) and is referred to as a \emph{mean-field} approximation.  For particular choices of process structure (e.g. Susceptible-Infected-Susceptible $SIS$ \cite{van2009virus} and Susceptible-Infected-Removed $SIR$ \cite{van2014exact}), it can be shown that the random variables involved are non-negatively correlated \cite{cator2014nodal}.  This supports a claim that the engendered approximation is an upper-bound on the infection rate of the process \cite{van2009virus}, though further claims of this type are lacking for other processes.  In this paper, we establish a rigorous condition for testing whether a particular system is a \emph{bounding system} for a given process, provide a construction for a non-trivial bounding system which is valid for \emph{any} compartmental spreading process (regardless of the degree of correlation between the involved variables), and show that whenever knowledge of correlation between variables is available, it may be used to construct better bounding systems.  We then apply our results to several models previously studied in the literature, and comment on the performance of the bounding systems to the approximations of prior work. 

\paragraph*{Statement of Contributions}
We define a notion of \emph{bounding systems} as a technique for approximating stochastic processes which cannot be tractably analyzed exactly.  For purposes of concreteness, we center our analysis on a general class of compartmental spreading models, similar to that which a general method of mean-field approximations was developed \cite{sahneh2013generalized}.  In particular, we add the notion of ``affector sets,'' to the models studied in \cite{sahneh2013generalized}: these allow us to cleanly consider models in which a particular transition is affected by neighboring nodes in a set of specified compartments.  We show that for any model belonging to this class, a non-trivial bounding system may be constructed (Section \ref{subsec:construct}), and that whenever knowledge of the involved variables' correlation is known \emph{a priori}, it may be incorporated to form bounding systems which incur less error (Section \ref{sec:bounding}).  We demonstrate the utility of our results by applying them to standard epidemic models in (Section \ref{sec:app}).  We demonstrate that whereas for some processes (e.g. Susceptible-Infected-Susceptible), the standard mean-field approximations are recovered trajectories of the bounding systems, there exist processes (e.g. Susceptible-Infected 1-Susceptible-Infected 2- Susceptible) for which this not the case.

\subsection{Preliminaries}
\paragraph*{Dynamical systems}
We denote the $n$-dimensional Euclidean space as $\real^n$. Given a vector $x \in \real^n$, we write its Euclidean norm as $\|x\|$.  We denote a system dynamics by the state equation $\dot{x} = f(x)$, where $f:\dom(f) \mapsto \real^n$, and the trajectory generated by the system as $x(t)$, where we omit the dependence on time wherever it is clear from context.  We say a function is globally Lipschitz continuous if there exists some constant $L$ such that $f(x) - f(y) \leq L \|f(x) - f(y)\|$ for all $x$ and $y$ in the domain of $f$. The cardinality of a set $Z$ is denoted by~$|Z|$.

\paragraph*{Probability}
We denote the probability of an event $A$ by $\Pr(A)$, the expectation of a random variable $X$ by $\mathbb{E}[X]$, and the indicator function of an event $A$ by $\mathbbm{1}_{\{A\}}$.  The covariance of two random variables $X$ and $Y$ is defined by $\sigma(X,Y) = \mathbb{E}[XY] - \mathbb{E}[X] \mathbb{E}[Y]$. 

\paragraph*{Graph theory}
A graph $G$ is defined as a pair $\{V,E\}$, in which $V$ is a set of vertices, and $E$ is a set of edges.  Two vertices $x,y \in V$ are connected if and only if $(x,y) \in E$.

\section{Compartmental Spreading Process Model}
\subsection{Spreading Process Construction} \label{subsec:construct}
We consider a Markovian spreading process in which each agent $i$ in the spreading graph $G = \{V,E\}$ takes membership in some compartment $c \in \mathcal{C}$ at all times.  We will denote the membership of node $i$ at time $t$ by $x_i(t) = c$.  Transitions from membership in compartment $c$ to membership in compartment $c^\prime$ (denoted by $\{c \rightarrow c^\prime\}$) occur stochastically via Poisson processes with rates $\rateit{c \rightarrow c^\prime }$, where

\begin{align*}
\lambda^{\{c \rightarrow c^\prime\}}_{i \; (t)} 
	= \begin{cases} 
	\sum_{\{j \in V,\; \tilde{c} \in \mathcal{A} (c,c^\prime)\}} \mathbbm{1}_{\{x_{j}(t) = \tilde{c} \}} \beta^{\{\tilde{c}; \, c \rightarrow c^\prime\}}_{ij} \ & \text{ if } \{c \rightarrow c^\prime\} \text{ is an external process},\\
                   \delta_i^{\{c \rightarrow c^\prime \}} & \text{ if } \{c \rightarrow c^\prime\} \text{ is an internal process}, \\
                   0 & \text{ if } \{c \rightarrow c^\prime\} \text{ is not possible},
      \end{cases}
\end{align*}
and we use the notation $\mathbbm{1}_{\{\cdot\}}$ as the indicator of an event, $\beta^{\{\tilde{c}; \, c \rightarrow c^\prime\}}_{ij}$ as the rate at which node $j$ in compartment $\tilde{c}$ generates events which transition node $i$ from membership in compartment $c$ to membership in compartment $c^\prime$, and $\delta_i^{\{c \rightarrow c^\prime \}}$ as the rate at which node $i$ generates events which transition node $i$ from membership in compartment $c$ to membership in compartment $c^\prime$.  Note that we define a process as \emph{external} whenever the transition $\{c \goesto c^\prime \}$ at node $i$ at time $t$ is influenced by the compartment membership of some node $j$ at time $t$.  A process is \emph{internal} whenever the process is not external, i.e. when the transition at a node $i$ is not affected by the compartment membership any other node $j$.  We define the set $\mathcal{A}(c,c^\prime)$ as the \emph{affector} set associated with the external transition $\{c \goesto c^\prime \}$, that is the set of all compartments by which the transition $\{c \goesto c^\prime \}$ is affected.  

The dynamics of the first moment (i.e., the expectation, denoted $\mathbb{E}$) of this class of processes can be written as:
\begin{equation} \label{eq:dynamics}
	\begin{aligned}
	\frac{d \mathbb{E} \left[ \mathbbm{1}_{\{x_{i}(t) = \{c\} \}} \right]}{d t} 
	&= \mathbb{E} 
		 \left[ \sum_{c^\prime \in \comps \setminus \{c\} } \mathbbm{1}_{\{x_{i}(t) = c^\prime\}} \rateit{c^\prime \rightarrow c} \right]
		- \mathbb{E} \left[ \sum_{c^\prime \in \comps \setminus \{c\}} \mathbbm{1}_{\{x_{i}(t) = c \}} \rateit{c \rightarrow c^\prime} \right],
	\end{aligned}
\end{equation}

and are well defined under mild assumptions, e.g. that $0 \leq \delta_i^{\{c \goesto c^\prime\}} < \infty$ and
 $0 \leq \beta_{ij}^{\{\tilde{c}; \; c \goesto c^\prime\}} < \infty$ for all $i$, $j$, $\tilde{c}$, $c$ and $c^\prime$.  We will not pause to derive these equations here, though we provide an extended discussion of their construction in Appendix \ref{app:construction}.  For further details, the interested reader may consult a reference of Markov processes, such as \cite{stroock2005introduction}.  
 
 This representation of the system is agent-centric: it considers states as the compartment membership probabilities of each node.  We choose to consider this representation due to its size: it is $O(|\comps|n)$ dimensional.  We note that this representation is not unique: we may represent the moment dynamics as a linear system in $O(|\comps|^n)$ dimensions if we consider a state space in which each possible node-compartment combination for the process is a state - this is described in much greater detail for a similar class of spreading models in \cite{sahneh2013generalized}.  This approach is intractable for all but the smallest number of nodes.

Whenever all processes are internal, the dynamics \eqref{eq:dynamics} are closed (i.e. each variable has a corresponding equation in the system dynamics), and may be analyzed without approximation: this representation is exact, which will be shown in Section \ref{subsec:internal}. 
However, when a process $\{c \rightarrow c^\prime\}$ is external, second-order moments of the form $\mathbb{E} [\mathbbm{1}_{\{x_{i}(t) = c \}} \mathbbm{1}_{ \{ x_{j}(t) = \tilde{c} \}}]$ are introduced, preventing closure: we do not have equations in the system dynamics which describe the evolution of $\mathbb{E} [\mathbbm{1}_{\{x_{i}(t) = c \}} \mathbbm{1}_{ \{ x_{j}(t) = \tilde{c} \}}]$.  Since higher order moments cannot, in general, be expressed in terms of first-order moments without incurring error, we cannot resolve this issue without approximation or a change in representation.  This is primarily addressed in one of two ways: representing the moment dynamics as a linear system in a larger ($O(|\comps|^n)$) state space (see \cite{sahneh2013generalized} for details), or applying a moment closure technique.  As the former is intractable, we shall concern ourselves with the latter.

\begin{figure}
\centering
\begin{tikzpicture}[thick,every node/.style={minimum size=.5em},decoration={border,segment length=2mm,amplitude=0.3mm,angle=90}]
				\pgfdeclarelayer{background}
				\pgfdeclarelayer{foreground}
				\pgfsetlayers{background,main,foreground}
  \node[draw,circle,fill=white, thick] (1) at (-1,1) {a};
  \node[draw,circle,fill=white, thick] (2) at (.5,1) {b};
  \node[draw,circle,fill=white, thick] (3) at (2,1) {c};
  \node[draw,circle,fill=white, thick] (4) at (-1,-1) {d};
  \node[draw,circle,fill=white, thick] (5) at (.5,-1) {e};
  \node[draw,circle,fill=white, thick] (6) at (2,-1) {f};
  \node[draw=black,dashed,fit=(1) (4) ,inner sep=0.1ex,ellipse] (el1) {};
  \node[draw=black,dashed,fit=(6) ,inner sep=0.1ex,ellipse] (el2) {};
  
  \path[->] (1) edge [very thick] (2);
  \path[->] (1) edge [very thick, bend left] (3);
  \path[->] (2) edge [very thick] (3);
  \path[->] (3) edge [very thick,dashed] (6);
  \path[->] (3) edge [very thick] (4);
  \path[->] (4) edge [very thick] (5);
  \path[->] (5) edge [very thick] (6);
  \path[->] (1) edge [very thick, bend left] (4);
  \path[->] (4) edge [very thick, bend left] (1);
  \path[->] (6) edge [very thick, bend left,dashed] (4);
  
  \node[draw,circle,fill=gray, thick] (21) at (5.5,2) {};
  \node[draw,circle,fill=gray, thick] (22) at (4.5,1.1) {};
  \node[draw,circle,fill=gray, thick] (23) at (7.5,1.5) {};
  \node[draw,circle,fill=gray, thick] (24) at (5,-1.3) {};
  \node[draw,circle,fill=gray, thick] (25) at (4,0) {};
  \node[draw,circle,fill=gray, thick] (26) at (6.1,1.4) {};
  \node[draw,circle,fill=gray, thick] (27) at (5.5,-0.2) {};
  \node[draw,circle,fill=gray, thick] (28) at (6,-1.1) {};
  \node[draw,circle,fill=gray, thick] (29) at (7.1,-.5) {}; 
  
  \begin{pgfonlayer}{background}
	\draw[black, thick, dotted] (3,-1.15) -- (3.8,0);
	\draw[black, thick, dotted] (3,1.15) -- (3.8,0);
    \node[draw=black,double,fill=white,fit=(1) (2) (3) (4) (5) (6) ,inner sep=1.3ex,circle] (cir) {};
  \end{pgfonlayer}
  
  \begin{pgfonlayer}{background}
  	\foreach \from in {21,...,29}
  	\foreach \to in {21,...,29} \ifthenelse{\from=\to}{}
    	{\path[->] (\from) edge [bend left] node{} (\to)};
    \end{pgfonlayer}
\end{tikzpicture}
	\caption{A compartmental spreading model with $|\comps| = 6$.  Dashed arrows denote external transitions, with their attractor groups circumscribed by dashed ellipses.  Solid arrows denote internal transitions.  For this particular example, the transitions $\{c \goesto f\}$ and $\{f \goesto d\}$ are external, with $\mathcal{A}(c,f) = \{f\}$ and $\mathcal{A}(f,d) = \{a,d\}$.}
\end{figure}
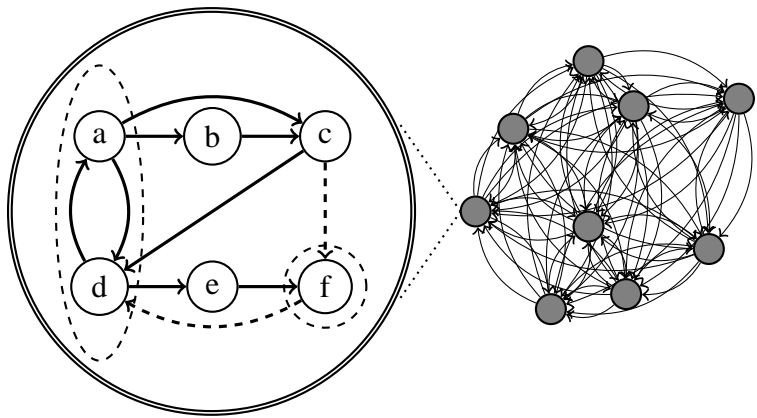

\subsection{Approximation Methods}

A common moment closure method in epidemic modeling is to replace the second-order moments $\mathbb{E} [\mathbbm{1}_{\{x_{i}(t) = c \}} \mathbbm{1}_{\{ x_{j}(t) = \tilde{c} \}}]$ by a product of expectations $\mathbb{E} [\mathbbm{1}_{\{x_{i}(t) = c \}}] \mathbb{E}[\mathbbm{1}_{\{ x_{j}(t) = \tilde{c} \}} ]$, in effect making an independence assumption.  While for particular model choices this treatment could be a good approximation (see, e.g., \cite{van2009virus,van2015accuracy} for discussion of the efficacy of this method as it pertains to the Susceptible-Infected-Susceptible  model), we notice two faults which exist when applied to general models.

Generally, this treatment can only give weak guarantees of accuracy, e.g. the existence of a neighborhood in which error is bounded \cite{singh2011approximate}, and cannot predict whether the system over-approximates or under-approximates the model.  Moreover, the systems constructed may over-approximate the model at some times, and under-approximate at others.
This hinders a designer's ability to relate the states of the approximated system to the moments of the modeled system with any confidence, preventing the development of rigorous guarantees with use of these approximations in many control applications, such as those studied in \cite{preciado2014optimal,CN-VMP-GJP:15-TCNS,watkins2015optimal}.  We note there are examples \cite{cator2012second, naasell2003extension} for which performing moment closure via an assumption of independence
have resulted in approximations which estimate impossible probabilities, e.g. values outside of the unit interval.  This, again, calls into question the efficacy of the technique; we will construct an alternative in what follows.

\subsection{Bounding Approximations} \label{sec:bounding}
To concentrate our analysis appropriately, we make a distinction between internal and external transitions.  Accordingly, we let $\mathcal{E}(c)$ denote the set of all compartments $c^\prime$ for which either or both of the transitions $\{c \rightarrow c^\prime \}$ and $\{c^\prime \goesto c\}$ are external.  We let $\mathcal{I}(c)$ denote the set of all compartments $c^\prime$ for which either or both of the transitions $\{c \rightarrow c^\prime \}$ and $\{c^\prime \goesto c\}$ are internal.  With this notation, we may rewrite the exact moment dynamics \eqref{eq:dynamics} as:
\begin{equation} \label{eq:exact_sep}
	\begin{aligned}
	&\frac{d \mathbb{E} \left[ \mathbbm {1}_{\{x_{i}(t) = c \}} \right]}{d t} =\\
	&\mathbb{E} \left[ \sum_{c^\prime \in \mathcal{E}(c)} \sum_{\tilde{c} \in \mathcal{A}(c^\prime, c)} \sum_{j \in V} \mathbbm{1}_{\{x_{i}(t) = c\}} \mathbbm{1}_{\{x_{j}(t) = \tilde{c} \}} \beta^{\{\tilde{c}; \, c^\prime \rightarrow c\}}_{ij}  \right] 
	- \mathbb{E} \left[ \sum_{c^\prime \in  \mathcal{E}(c)} \sum_{\tilde{c} \in \mathcal{A}(c, c^\prime)} \sum_{j \in V} \mathbbm{1}_{\{x_{i}(t) = c \}} \mathbbm{1}_{\{x_{j}(t) = \tilde{c} \}} \beta^{\{\tilde{c};\, c \rightarrow c^\prime\}}_{ij} \right]\\
	+ &\mathbb{E} \left[ \sum_{c^\prime \in  \mathcal{I}(c)}  \mathbbm{1}_{\{x_{i}(t) = c^\prime\}} \delta_i^{\{c^\prime \rightarrow c\}}  \right] 
		- \mathbb{E} \left[ \sum_{c^\prime \in  \mathcal{I}(c)} \mathbbm{1}_{\{x_{i}(t) = c \}} \delta_i^{\{c \rightarrow c^\prime \}} \right].
	\end{aligned}
\end{equation}

Note that we may evaluate the expectations and rewrite \eqref{eq:exact_sep} in terms of probabilities, which yields:
\begin{equation}
	\begin{aligned}
		&\frac{d \Pr(x_i = s)}{d t} =\\
		&\sum_{c^\prime \in  \mathcal{E}(c)} \sum_{\tilde{c} \in \mathcal{A}(c^\prime, c)} \sum_{j \in V} \Pr(x_i = c^\prime, x_j = c) \beta^{\{\tilde{c}; \, c^\prime \rightarrow c\}}_{ij}
		-\sum_{c^\prime \in  \mathcal{E}(c)} \sum_{\tilde{c} \in \mathcal{A}(c, c^\prime)} \sum_{j \in V}\Pr(x_i = c, x_j = c^\prime) \beta^{\{\tilde{c}; \; c \rightarrow c^\prime\}}_{ij} \\
		+&\sum_{c^\prime \in  \mathcal{I}(c)}  \Pr(x_{i}(t) = c^\prime) \delta_i^{\{c^\prime \rightarrow c\}}
			- \Pr(x_{i}(t) = c) \delta_i^{\{c \rightarrow c^\prime \}}.
		\end{aligned}
\end{equation}

This representation brings us close to a system we can approximate well: we need only to bound the second-order moment in terms of first-order moments.  For this, we recall that for any two events $A$ and $B$, the  Fr\'{e}chet inequalities (see,e.g., \cite{tankov2011improved}) give that $\Pr(A \cap B)$ satisfies
\begin{equation} \label{eq:frechet}
	\max\{0,\Pr(A)+ \Pr(B) - 1\} \leq \Pr(A \cap B) \leq \min\{\Pr(A),\Pr(B)\}.
\end{equation}

If we apply \eqref{eq:frechet} to \eqref{eq:exact_sep} appropriately and define the state $p_i^{\{c\}} = \Pr(x_i = \{c\})$, we can verify that the bounds:
\begin{equation} \label{eq:bound}
	\begin{aligned}
			\frac{d p_i^{\{s\}}}{d t} \geq&
			\sum_{c^\prime \in  \mathcal{E}(c)} \sum_{\tilde{c} \in \mathcal{A}(c^\prime, c)} \sum_{j \in V} \max\{0, p_i^{\{c^\prime\}}
			+ p_j^{\{c\}} - 1\} \beta^{\{\tilde{c}; \;c^\prime \rightarrow c\}}_{ij}
			- \sum_{c^\prime \in  \mathcal{E}(c)} \sum_{\tilde{c} \in \mathcal{A}(c, c^\prime)} \sum_{j \in V} \min\{p_i^{\{c\}}, p_j^{\{c^\prime\}}\} \beta^{\{\tilde{c}; \;c \rightarrow c^\prime\}}_{ij} \\
			&+\sum_{c^\prime \in  \mathcal{I}(c)}  p_i^{\{c^\prime \}} \delta_i^{\{c^\prime \rightarrow c\}}
				- p_i^{\{c\}} \delta_i^{\{c \rightarrow c^\prime \}}\\
%
		\frac{d p_i^{\{c\}}}{d t} \leq&
		\sum_{c^\prime \in  \mathcal{E}(c)} \sum_{\tilde{c} \in \mathcal{A}(c^\prime, c)} \sum_{j \in V} \min\{p_i^{\{c\}}, p_j^{\{c^\prime\}} \} \beta^{\{\tilde{c}; \;c^\prime \rightarrow c\}}_{ij}
		- \sum_{c^\prime \in  \mathcal{E}(c)} \sum_{\tilde{c} \in \mathcal{A}(c, c^\prime)} \sum_{j \in V} \max\{0,p_i^{\{c\}} + p_j^{\{c^\prime\}} - 1\} \beta^{\{\tilde{c}; \; c \rightarrow c^\prime\}}_{ij}\\
		&+\sum_{c^\prime \in  \mathcal{I}(c)}  p_i^{\{c^\prime\}} \delta_i^{\{c^\prime \rightarrow c\}} - p_i^{\{c\}} \delta_i^{\{c \rightarrow c^\prime \}}
	\end{aligned}
\end{equation}
hold.  We will now show how such bounds can be used to approximate the compartment membership probabilities of the general spreading model.

\begin{lem}[Multivariate Comparison] \label{lem:comparison}
	Consider a system of equations
	$$\dot{x} = f(x)$$
	with $x \in D \subset \real^n$ and $f : D \rightarrow \real^n$.
	If there exist vector functions $\bar{f}$ and $\ubar{f}$ defined on $D$ such that for each component $\bar{f}_i$ and $\ubar{f}_i$ we have that
	\begin{align*}
	f_i(z) &\leq \bar{f}_i(\bar{z},\ubar{z}), \\
	f_i(z) &\geq \ubar{f}_i(\bar{z},\ubar{z}) ,
	\end{align*}
	hold wherever $\ubar{z}_i = z_i = \bar{z}_i$, $\ubar{z}_j \leq z_j \leq \bar{z}_j$ for all $j \neq i$, and 
	$\ubar{z}, z, \bar{z} \in D$.
	Then, the solutions to the system
	\begin{equation*}
		\begin{aligned}
		\dot{\bar{x}}= \bar{f}(\bar{x},\ubar{x}),\\
		\dot{\ubar{x}} = \ubar{f}(\bar{x},\ubar{x}),
		\end{aligned}
	\end{equation*}
	with initial conditions $\bar{x}(t_0) = \ubar{x}(t_0) = x(t_0)$, satisfy
	$\ubar{x}_i(t) \leq x_i(t) \leq \bar{x}_i(t)$ for all $i$ and $t \geq t_0$, 
	provided that the solutions $\ubar{x}(t)$, $x(t)$, and $\bar{x}(t)$ are unique and continuous. 
\end{lem}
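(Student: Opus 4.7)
My plan is to prove this via a first--crossing--time argument, sharpened by the standard $\varepsilon$-perturbation trick that is used to turn weak inequalities at the contact set into strict ones. The structural hypothesis says that $\bar{f}_i$ dominates $f_i$ (and $\ubar{f}_i$ is dominated by $f_i$) precisely on the ``contact configuration'' where $\bar{z}$ touches $z$ in coordinate $i$ and sandwiches it elsewhere. This is exactly what is needed to prevent $\bar{x}_i$ from ever being overtaken by $x_i$, and symmetrically for $\ubar{x}_i$, provided we can arrange a strict inequality at any candidate crossing time.

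First I would introduce perturbed systems
\begin{equation*}
\dot{\bar{x}}^{\varepsilon} = \bar{f}(\bar{x}^{\varepsilon},\ubar{x}^{\varepsilon}) + \varepsilon \mathbf{1}, \qquad
\dot{\ubar{x}}^{\varepsilon} = \ubar{f}(\bar{x}^{\varepsilon},\ubar{x}^{\varepsilon}) - \varepsilon \mathbf{1},
\end{equation*}
with initial conditions $\bar{x}^{\varepsilon}(t_0) = x(t_0) + \varepsilon\mathbf{1}$ and $\ubar{x}^{\varepsilon}(t_0) = x(t_0) - \varepsilon\mathbf{1}$, chosen so that the strict ordering $\ubar{x}^{\varepsilon}(t_0) < x(t_0) < \bar{x}^{\varepsilon}(t_0)$ holds at the start. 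I would then define
\begin{equation*}
T_\varepsilon = \inf\bigl\{\, t \geq t_0 \;:\; \bar{x}^{\varepsilon}_i(t) \leq x_i(t) \text{ or } \ubar{x}^{\varepsilon}_i(t) \geq x_i(t) \text{ for some } i\,\bigr\},
\end{equation*}
and argue by contradiction that $T_\varepsilon = \infty$. If $T_\varepsilon$ were finite, continuity together with the definition of the infimum would force, say, $\bar{x}^{\varepsilon}_i(T_\varepsilon) = x_i(T_\varepsilon)$ for some index $i$, while the full sandwich $\ubar{x}^{\varepsilon}(T_\varepsilon) \leq x(T_\varepsilon) \leq \bar{x}^{\varepsilon}(T_\varepsilon)$ still holds. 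Invoking the hypothesis at this contact configuration yields $\bar{f}_i(\bar{x}^{\varepsilon}(T_\varepsilon),\ubar{x}^{\varepsilon}(T_\varepsilon)) \geq f_i(x(T_\varepsilon))$, which upgrades to the \emph{strict} inequality $\dot{\bar{x}}^{\varepsilon}_i(T_\varepsilon) \geq \dot{x}_i(T_\varepsilon) + \varepsilon > \dot{x}_i(T_\varepsilon)$ thanks to the perturbation. This contradicts the crossing at $T_\varepsilon$, since a function cannot cross another from above at a point where its derivative is strictly larger. The symmetric case $\ubar{x}^{\varepsilon}_i(T_\varepsilon) = x_i(T_\varepsilon)$ is identical.

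Having established $\ubar{x}^{\varepsilon}(t) < x(t) < \bar{x}^{\varepsilon}(t)$ for all $t \geq t_0$ and all sufficiently small $\varepsilon > 0$, I would close the argument by letting $\varepsilon \to 0$ and using continuous dependence of the solutions of the upper and lower systems on $\varepsilon$ (which follows from the stated uniqueness and continuity of solutions) to conclude $\ubar{x}(t) \leq x(t) \leq \bar{x}(t)$ for every $t \geq t_0$.

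The main obstacle is the one the $\varepsilon$-perturbation exists to handle: the raw hypothesis only delivers a non-strict inequality $\bar{f}_i \geq f_i$ at the contact set, which is by itself insufficient to rule out a touching followed by a crossing. Absorbing $\varepsilon$ into both the dynamics and the initial conditions guarantees strict separation uniformly, enabling the clean derivative-comparison step at $T_\varepsilon$; the continuous-dependence step at the end is where the hypothesis of unique, continuous solutions is essential.
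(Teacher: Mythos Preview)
Your argument is the same first--crossing--time strategy the paper uses, but with the standard $\varepsilon$--perturbation refinement layered on top. The paper proceeds directly: it supposes $\tau'$ is the first failure time, picks an earlier contact time $\tau'-\epsilon'$ at which $\bar{x}_i=x_i$, and then \emph{asserts} $\dot{\bar{x}}_i(\tau'-\epsilon')<\dot{x}_i(\tau'-\epsilon')$ to obtain a contradiction with the hypothesis. As you correctly diagnose in your last paragraph, that strict derivative inequality does not follow merely from equality at $\tau'-\epsilon'$ and strict ordering at $\tau'$; the curves could touch with equal derivatives and separate afterward. Your perturbation converts the weak comparison at the contact set into a strict one, which is the textbook way to close this gap, so in that respect your write--up is more careful than the paper's. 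Your infimum definition of $T_\varepsilon$ also absorbs the ``several indices cross at once'' case that the paper treats separately with a permutation argument.

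Two points deserve attention. First, the hypothesis as literally written grants $f_i(z)\le\bar f_i(\bar z,\ubar z)$ only when \emph{all three} of $\ubar z_i,z_i,\bar z_i$ coincide. At your time $T_\varepsilon$ you have $\bar x^\varepsilon_i(T_\varepsilon)=x_i(T_\varepsilon)$ but, because of the perturbed initial data, $\ubar x^\varepsilon_i(T_\varepsilon)<x_i(T_\varepsilon)$ strictly, so the literal hypothesis does not apply; the paper's own proof has the identical issue. The intended (and usable) hypothesis is that the upper comparison needs only $z_i=\bar z_i$ together with the sandwich in the remaining coordinates, and you should state that reading explicitly. Second, your final step invokes continuous dependence of $(\bar x^\varepsilon,\ubar x^\varepsilon)$ on $\varepsilon$; this is fine in the paper's applications because the vector fields there are globally Lipschitz, but it is not an automatic consequence of ``unique and continuous solutions'' alone, so it would be cleaner to add a Lipschitz (or at least continuous--dependence) assumption when you state the lemma.
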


\begin{proof}
	We argue by contradiction.  In particular, we will show that no $\tau \in [t_0, \infty)$ exists such that $\ubar{x}_i(\tau) \leq x_i(\tau) \leq \bar{x}_i(\tau)$ fails to hold.  Assume for contradiction that $\tau^\prime$ is first such time that the chain of inequalities fails to hold.  We may assume $\bar{x}_i(\tau^\prime) < x_i(\tau^\prime)$  for some $i$; the opposite case will hold by a similar argument.  By continuity of $\bar{x}_i(t)$ and $x_i(t)$, there must exist some $\epsilon > 0$ such that $\bar{x}_i(\tau^\prime - \epsilon) = x_i(\tau^\prime - \epsilon)$.  Let $\epsilon^\prime$ be the smallest such value.  Then, we must have that $\dot{\bar{x}}_i(\tau^\prime - \epsilon^\prime) < \dot{x}_i(\tau^\prime - \epsilon^\prime)$.  However, this contradicts our hypothesis: at $t = \tau^\prime - \epsilon^\prime$, it must be that $\dot{\ubar{x}}_i = \ubar{f}_i(\bar{x},\ubar{x}) \leq f_i(x) \leq \bar{f}_i(\bar{x},\ubar{x}) = \dot{\bar{x}}_i$ holds.  Hence, no such $\tau^\prime$ exists, which implies $\ubar{x}_i(t) \leq x_i(t) \leq \bar{x}_i$ for all $i$ and all $t \geq t_0$.
	
	Note that the case in which $\bar{x}_i(\tau^\prime) < x_i(\tau^\prime)$  for countably many $i$ follows almost immediately.  Formally, let $\mathcal{I}$ be the set of all $i$ for which $\bar{x}_i(\tau^\prime) < x_i(\tau^\prime)$ holds.  Then, by continuity of $x_i(t)$, there exist $\epsilon_i > 0$ such that $\bar{x}_i(\tau^\prime - \epsilon_i) = x_i(\tau^\prime - \epsilon_i)$ holds.  For each $i$, let $\epsilon_i^\prime$ be the smallest value, and define the index set $\mathcal{J}$ to be a permutation of $\mathcal{I}$ such that $\epsilon_{j_1}^\prime \geq \epsilon_{j_2}^\prime \geq  \dots \geq \epsilon_{j_n}^\prime$.  Then, we must have that $\dot{\bar{x}}_{j_1}(\tau^\prime - \epsilon_{j_1}^\prime) < \dot{x}_{j_1}(\tau^\prime - \epsilon_{j_1}^\prime)$, but this breaks our hypothesis.  The proof is completed by noting that the contradiction established carries through by induction.
\end{proof}

Before continuing, it is useful to note that the functions $\bar{f}$ and $\ubar{f}$ required by the hypothesis of Lemma \ref{lem:comparison} are \emph{not} in general unique.  However, it is trivially true that the pointwise minimum of all upper-bounding trajectories is itself an upper-bounding trajectory, and the pointwise maximum of all lower-bounding trajectories is itself is itself a lower-bounding trajectory: having multiple bounding systems can only help our analysis for any particular model.
 

It is easy to see that this result gives us a mechanism for constructing a system which bounds the evolution of $p_i^{\{s\}}(t)$ by leveraging the bound \eqref{eq:bound}.
\begin{theorem} \label{thm:bounding_sys}
	Given any compartmental spreading process governed by the dynamics \eqref{eq:exact_sep}, the solutions of the system
	\begin{equation} \label{eq:bounding}
		\begin{aligned}
		\dot{\bar{p}}_i^{\{c\}}
		= &\sum_{s^\prime \in  \mathcal{E}(c)} \sum_{\tilde{c} \in  \mathcal{A}(c^\prime, c)} \sum_{j \in V} \min\{(1-\bar{p}_i^{ \{c\}}), \bar{p}_j^{\{c\}} \}  \beta^{\{\tilde{c}; \; c^\prime \rightarrow c\}}_{ij}
				- \sum_{c^\prime \in  \mathcal{E}(c)} \sum_{\tilde{c} \in  \mathcal{A}(c, c^\prime)} \sum_{j \in V} \max\{0,\bar{p}_i^{\{c\}} + \ubar{p}_j^{\{\tilde{c}\}} - 1\} \beta^{\{\tilde{c}; \;c \rightarrow c^\prime\}}_{ij} \\
				&+\sum_{c^\prime \in  \mathcal{I}(c)} (1 - \bar{p}_i^{\{c\}}) \delta_i^{\{c^\prime \rightarrow c\}}
				- \bar{p}_i^{\{c\}} \delta_i^{\{c \rightarrow c^\prime \}}\\
		\dot{\ubar{p}}_i^{\{c\}}
		=&\sum_{c^\prime \in  \mathcal{E}(c)} \sum_{\tilde{c} \in  \mathcal{A}(c^\prime, c)} \sum_{j \in V} \max\{0, \ubar{p}_i^{\{\tilde{c}\}} + \ubar{p}_j^{\{c\}} - 1\} \beta^{\{\tilde{c}; \; c^\prime \rightarrow c\}}_{ij} - \sum_{c^\prime \in  \mathcal{E}(c)} \sum_{\tilde{c} \in  \mathcal{A}(c, c^\prime)} \sum_{j \in V}\min\{\ubar{p}_i^{\{c\}}, \bar{p}_j^{\{\tilde{c}\}}\} \beta^{\{\tilde{c}; \; c \rightarrow c^\prime\}}_{ij} \\
		+&\sum_{c^\prime \in  \mathcal{I}(c)}  \ubar{p}_i^{\{c^\prime\}} \delta_i^{\{c^\prime \rightarrow c\}}
		- \ubar{p}_i^{\{c\}} \delta_i^{\{c \rightarrow c^\prime \}}\\
		\end{aligned}
	\end{equation}
	satisfy $0 \leq \ubar{p}_i^{\{c\}}(t) \leq p_i^{\{c\}} (t) \leq \bar{p}_i^{\{c\}} (t) \leq 1$ for all $t \geq t_0$. for all $\ubar{p}_i^{\{c\}}(t_0) = p_i^{\{c\}}(t_0) = \bar{p}_i^{\{c\}}(t_0) \in [0,1]$.
\end{theorem}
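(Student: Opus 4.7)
The plan is to verify that the system \eqref{eq:bounding} satisfies the hypotheses of Lemma \ref{lem:comparison} when paired with the exact dynamics \eqref{eq:exact_sep}, and then to argue separately that the $[0,1]$ containment of the bounds is preserved for all $t \geq t_0$. In particular, I identify the exact first-moment dynamics \eqref{eq:exact_sep} with $\dot{x} = f(x)$ (taking the state $z$ to be the vector of all $p_i^{\{c\}}$), and identify the right-hand sides of \eqref{eq:bounding} with the upper and lower comparison functions $\bar{f}$ and $\ubar{f}$, respectively. Checking the hypotheses of Lemma \ref{lem:comparison} then amounts to verifying that at any point where $\ubar{p}_i^{\{c\}} = p_i^{\{c\}} = \bar{p}_i^{\{c\}}$ and $\ubar{p}_j^{\{\tilde c\}} \leq p_j^{\{\tilde c\}} \leq \bar{p}_j^{\{\tilde c\}}$ for $(j,\tilde c)\neq(i,c)$, the scalar inequality $\ubar{f}_i \leq f_i \leq \bar{f}_i$ holds componentwise.

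To accomplish this, I would apply the Fr\'echet inequalities \eqref{eq:frechet} term by term. For each external ingress term of the form $\Pr(x_i = c^\prime, x_j = \tilde c)\beta^{\{\tilde c;\, c^\prime\to c\}}_{ij}$, the Fr\'echet upper bound gives $\Pr(x_i = c^\prime, x_j = \tilde c) \leq \min\{\Pr(x_i = c^\prime),\Pr(x_j = \tilde c)\}$, and since $c^\prime \neq c$ we have $\Pr(x_i = c^\prime) \leq 1 - p_i^{\{c\}}$; combined with monotonicity of $\min$, the hypothesis $\bar{p}_i^{\{c\}} = p_i^{\{c\}}$ and $p_j^{\{\tilde c\}} \leq \bar{p}_j^{\{\tilde c\}}$ yields the relevant $\min\{1-\bar p_i^{\{c\}},\bar p_j^{\{\tilde c\}}\}$ term in $\bar f_i$. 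For each external egress term $\Pr(x_i = c, x_j = \tilde c)\beta^{\{\tilde c;\, c\to c^\prime\}}_{ij}$ (which enters with a negative sign), the Fr\'echet lower bound $\Pr(x_i = c, x_j = \tilde c) \geq \max\{0, p_i^{\{c\}} + p_j^{\{\tilde c\}} - 1\}$ together with monotonicity of $\max$ and $\ubar p_j^{\{\tilde c\}} \leq p_j^{\{\tilde c\}}$ furnishes the $\max\{0,\bar p_i^{\{c\}}+\ubar p_j^{\{\tilde c\}}-1\}$ term. The lower bound $\ubar f_i \leq f_i$ is established symmetrically by swapping the roles of the Fr\'echet upper and lower bounds. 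The internal terms pass through unchanged because they depend only on $p_i^{\{c\}}$ itself, which agrees with both $\bar p_i^{\{c\}}$ and $\ubar p_i^{\{c\}}$ at the tested point.

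Having verified the comparison hypothesis, the inequality $\ubar p_i^{\{c\}}(t) \leq p_i^{\{c\}}(t) \leq \bar p_i^{\{c\}}(t)$ follows immediately from Lemma \ref{lem:comparison}, provided the solutions are unique and continuous. Uniqueness and continuity follow because the right-hand sides in \eqref{eq:bounding} are finite sums of pointwise $\min$ and $\max$ of affine functions and hence globally Lipschitz, so the standard Picard--Lindel\"of theorem applies. To conclude the $[0,1]$ containment, I would give a short invariance argument by inspecting the vector field on the boundary of the unit box. Concretely, when $\bar p_i^{\{c\}} = 1$ (with the other coordinates inside $[0,1]$), the ingress terms vanish because $\min\{1-\bar p_i^{\{c\}},\bar p_j^{\{c\}}\}=0$ and $(1-\bar p_i^{\{c\}})\delta_i^{\{c^\prime\to c\}}=0$, while the egress terms are nonpositive, so $\dot{\bar p}_i^{\{c\}} \leq 0$; symmetrically, when $\ubar p_i^{\{c\}} = 0$, every egress term vanishes and the ingress terms are nonnegative, so $\dot{\ubar p}_i^{\{c\}} \geq 0$. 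This gives the forward invariance $\ubar p_i^{\{c\}}(t) \geq 0$ and $\bar p_i^{\{c\}}(t) \leq 1$ for $t \geq t_0$, completing the proof.

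The main obstacle is purely bookkeeping: one must be careful that the Fr\'echet bound on each pairwise joint probability is applied with the \emph{correct} sign (upper bound on a positive term, lower bound on a negative term, and vice versa for $\ubar f$) and that the monotonicity substitutions of $p_j^{\{\tilde c\}}$ by $\bar p_j^{\{\tilde c\}}$ or $\ubar p_j^{\{\tilde c\}}$ are made consistently with which comparison function is being constructed. No new ideas are required beyond Lemma \ref{lem:comparison} and the Fr\'echet inequalities \eqref{eq:frechet}; the invariance check at the boundary of $[0,1]^{|\comps|n}$ is routine but essential because Lemma \ref{lem:comparison} alone does not furnish it.
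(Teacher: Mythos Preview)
Your approach is essentially the paper's: verify the hypotheses of Lemma~\ref{lem:comparison} using the Fr\'echet bounds \eqref{eq:frechet}, note Lipschitz continuity for existence and uniqueness, and then check forward invariance of $[0,1]$ at the boundary. The paper does exactly this, in the same order.

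There is one genuine oversight in your bookkeeping, however. You write that ``the internal terms pass through unchanged because they depend only on $p_i^{\{c\}}$ itself.'' This is not correct: the internal \emph{ingress} term in the exact dynamics is $\sum_{c'\in\mathcal{I}(c)} p_i^{\{c'\}}\delta_i^{\{c'\to c\}}$, which depends on $p_i^{\{c'\}}$ for $c'\neq c$ --- a different component of the state vector. In the upper bounding system \eqref{eq:bounding} this term is \emph{not} left unchanged; it is replaced by $(1-\bar p_i^{\{c\}})\delta_i^{\{c'\to c\}}$. To verify $f_i \leq \bar f_i$ for this term at the test point you need the simplex inequality $p_i^{\{c'\}} \leq 1 - p_i^{\{c\}} = 1 - \bar p_i^{\{c\}}$, which follows from $\sum_{\hat c} p_i^{\{\hat c\}} = 1$. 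You already invoked exactly this inequality for the external ingress terms (``since $c'\neq c$ we have $\Pr(x_i=c')\leq 1-p_i^{\{c\}}$''); you simply neglected to apply it to the internal ingress terms as well. The paper singles out precisely this observation in its proof. Once you add that one line, your argument is complete and matches the paper's.
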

\begin{proof}
	We begin by noting that the dynamics $\dot{\ubar{p}}_i$ and $\dot{\bar{p}}_i$ (and hence $\dot{p}_i$) are globally Lipschitz with constant $$L = \max_{c} \{|\sum_{c^\prime \in \mathcal{E}(c)} \sum_{\tilde{c} \in  \mathcal{A}(c^\prime, c)} \sum_{j \in V} \beta_{ij}^{\{\tilde{c}; \; c^\prime \goesto c\}} + \sum_{c^\prime \in \mathcal{I}(c)} \delta_i^{\{c^\prime \goesto c\}}|, |\sum_{c^\prime \in \mathcal{E}(c)} \sum_{\tilde{c} \in  \mathcal{A}(c, c^\prime)} \sum_{j \in V} \beta_{ij}^{\{\tilde{c}; \;c \goesto c^\prime\}} + \sum_{c^\prime \in \mathcal{
	I}(c)} \delta_i^{\{c \goesto c^\prime\}}|\}.$$  Hence, the solutions $\ubar{p}_i^{\{c\}}(t)$, $p_i^{\{c\}}(t)$ and $\bar{p}_i^{\{c\}}(t)$ exist, are unique, and are continuous.  Now, we note $p_i^{\{c^\prime\}} = (1 - \sum_{\hat{c} \neq c^\prime} p_i^{\{\hat{c}\}}) \leq (1 - p_i^{\{c\}})$ for all $i$, $c$ and $c^\prime$.  It then follows that the inequalities $\ubar{p}_i(t) \leq p_i(t) \leq \bar{p}_i(t)$ hold for all $t \geq t_0$ by application of Theorem \ref{lem:comparison}.  
	
	To show that $\bar{p}_i^{\{c\}} (t) \leq 1$, we note that by construction $\dot{\bar{p}}^{\{c\}} \leq 0$ whenever $\bar{p}_i^{\{c\}} = 1$, and by hypothesis $\bar{p}_i^{\{c\}} (t_0) \in [0,1]$.  So, by continuity of $\bar{p}_i^{\{c\}}(t)$, we have $\bar{p}_i^{\{c\}}(t) \leq 1$ for all $t \geq t_0$.  To show that $0 \leq \ubar{p}_i^{\{c\}}(t)$ for all $t \geq t_0$, we note that when $\ubar{p}_i^{\{s\}} = 0$, we have $\dot{\ubar{p}}_i^{\{c\}} \geq 0$.  So, by $\ubar{p}_i^{\{c\}} \in [0,1]$ and continuity of $\ubar{p}_i^{\{c\}}(t)$, we have $ 0 \leq \ubar{p}_i^{\{c\}}(t)$ for all $t \geq t_0$.
	Taken together, we have $0 \leq \ubar{p}_i(t) \leq p_i(t) \leq \bar{p}_i (t) \leq 1$ for all $t \geq t_0$.
\end{proof}

\begin{remark}[Existence of Alternate Bounding Systems] \label{rem:alternate}
{\rm	
Note that this construction is just one possible system satisfying the hypothesis of Theorem \ref{lem:comparison}.  In particular, with further knowledge of the model's structure we may construct alternate bounding systems, some of which may be more suitable to the problem at hand (e.g. easier to analyze or more accurate) than the one provided in Theorem \ref{thm:bounding_sys}.  As particular examples, we demonstrate in Section \ref{subsec:SIR} that a better approximation system can be constructed for the Susceptible-Infected-Removed ($SIR$) model.  We show the same for the Susceptible-Infected-Susceptible ($SIS$) model in Section \ref{subsec:SIS}.  Our construction testifies that one such system is possible for \emph{any} compartmental spreading process, but is in no means a guarantee that it is the \emph{best} approximation. \oprocend
}
\end{remark}
 
The reader may note that for all spreading process, all compartment membership probabilities at each node must sum to 1, and that the trajectories of \eqref{eq:bounding} need not guarantee this of all possible evolutions allowed by the bounding trajectories.  Hence, it is worthwhile to consider a means for eliminating any impossible state trajectories, as this may improve the approximations. 

\begin{cor}[Eliminating Impossible Trajectories] \label{cor:eliminating}
Consider the set of trajectories $\{\ubar{p}_i^{\{c\}},\bar{p}_i^{\{c\}}\}_{\{c \in \comps, i \in V\}}$ of \eqref{eq:bounding}.  Then, for each $c$ and $i$, the trajectory $\max\{0,1 - \sum_{c^\prime \in \comps \setminus \{c\}} \bar{p}_i^{\{c\}} \}$ is an under-approximator of $p_i^{\{c\}}$ and the trajectory $\min \{1, 1 - \sum_{c^\prime \in \comps \setminus \{c\}} \ubar{p}_i^{\{c\}}\}$ is an over-approximator of $p_i^{\{c\}}$.
\end{cor}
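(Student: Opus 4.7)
The plan is to combine the componentwise bounds of Theorem \ref{thm:bounding_sys} with the law of total probability applied at each node. Since every realization of the process places node $i$ in exactly one compartment at each time $t$, we have the identity $\sum_{c \in \comps} p_i^{\{c\}}(t) = 1$, which rearranges to $p_i^{\{c\}}(t) = 1 - \sum_{c^\prime \in \comps \setminus \{c\}} p_i^{\{c^\prime\}}(t)$ for every $c$ and $i$ and every $t \geq t_0$.

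First I would establish the under-approximator. Applying Theorem \ref{thm:bounding_sys} componentwise yields $p_i^{\{c^\prime\}}(t) \leq \bar{p}_i^{\{c^\prime\}}(t)$ for every $c^\prime$; summing these inequalities over $c^\prime \in \comps \setminus \{c\}$ and substituting into the identity above produces $p_i^{\{c\}}(t) \geq 1 - \sum_{c^\prime \in \comps \setminus \{c\}} \bar{p}_i^{\{c^\prime\}}(t)$. Since $p_i^{\{c\}}(t) \geq 0$ holds trivially for any probability, taking the pointwise maximum with $0$ delivers the claim. The over-approximator follows by a symmetric argument: using $\ubar{p}_i^{\{c^\prime\}}(t) \leq p_i^{\{c^\prime\}}(t)$ in the same identity yields $p_i^{\{c\}}(t) \leq 1 - \sum_{c^\prime \in \comps \setminus \{c\}} \ubar{p}_i^{\{c^\prime\}}(t)$, and combining with the trivial upper bound $p_i^{\{c\}}(t) \leq 1$ completes the proof.

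There is no substantive obstacle in this argument --- it is essentially a bookkeeping step that exploits a global constraint (unit sum across compartments at each node) which is not enforced on the individual trajectories of \eqref{eq:bounding}. The only conceptual point worth highlighting is that although Theorem \ref{thm:bounding_sys} guarantees each $\bar{p}_i^{\{c^\prime\}}(t)$ and $\ubar{p}_i^{\{c^\prime\}}(t)$ lies in $[0,1]$, the sums $\sum_{c^\prime \neq c} \bar{p}_i^{\{c^\prime\}}(t)$ and $\sum_{c^\prime \neq c} \ubar{p}_i^{\{c^\prime\}}(t)$ are not individually constrained by the unit-sum property; it is precisely this slackness that makes clipping against $0$ and $1$ meaningful and occasionally strictly tightens the raw bounds produced by \eqref{eq:bounding}.
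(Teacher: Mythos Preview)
Your argument is correct and is essentially the same as the paper's: the paper's proof is the single line ``This follows immediately from noting $\ubar{p}_i^{\{c\}} \leq p_i^{\{c\}} \leq \bar{p}_i^{\{c\}}$ holds for all $i$ and $c$,'' and you have simply unpacked that observation by invoking the unit-sum constraint $\sum_{c} p_i^{\{c\}} = 1$ explicitly and then clipping against the trivial bounds $0$ and $1$. There is no meaningful difference in approach.
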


\begin{proof}
	This follows immediately from noting $\ubar{p}_i^{\{c\}} \leq p_i^{\{c\}} \leq \bar{p}_i^{\{c\}}$ holds for all $i$ and $c$.
\end{proof}

We will see the effects of Corollary \ref{cor:eliminating} when investigating the efficacy of the bounding system \eqref{eq:bounding} in Section \ref{sec:app}.

\subsection{Leveraging Correlations}
When considering a process for which it is known that the involved variables are non-negatively correlated (e.g. $SIS$ and $SIR$ \cite{cator2014nodal}), we can improve upon the upper-bounding trajectory.  To show this formally, we will use the metric $d_{\{t_0,T\}}(f,g) = \max_{t \in [t_0,T]} |f(t) - g(t)|$ (where $f$ and $g$ are assumed continuous) as a measure for the quality of the bounding system - it tells us the size of the largest gap which occurs between two trajectories over the initial interval $[t_0,T]$.  Heuristically, this gives us a measure of the error incurred in the approximation.  We can state a sufficient condition for when one bounding system is better than another, with respect to $d_{\{t_0,T\}}$.

\begin{cor}[Tighter Bounding Systems] \label{cor:tight}
Consider the metric $d_{\{t_0,T\}}(f,g) = \max_{t \in [t_0,T]} |f(t) - g(t)|$.  Then, a system
\begin{equation*}
	\begin{aligned}
	\dot{\tilde{x}} = \tilde{f}(\tilde{x},\utilde{x})\\
	\dot{\utilde{x}} = \utilde{f}(\tilde{x},\utilde{x})\\
	\dot{\bar{x}} = \bar{f}(\bar{x},\ubar{x})\\
	\dot{\ubar{x}} = \ubar{f}(\bar{x},\ubar{x})
	\end{aligned}
\end{equation*}
which satisfies $\dom(\tilde{f}) = \dom(\bar{f})$ and $\ubar{f}_i(\bar{x},\ubar{x}) \leq \utilde{f}_i(\tilde{x},\utilde{x}) \leq \tilde{f}_i(\tilde{x},\utilde{x}) \leq \bar{f}_i(\bar{x},\ubar{x})$ for all $(\tilde{x},\utilde{x}) \in \dom(\tilde{f})$ and all $(\bar{x},\ubar{x}) \in \dom(\bar{f})$ engenders solutions $\{\utilde{x}_i(t),\tilde{x}_i(t),\ubar{x}_i(t),\bar{x}_i(t)\}_{i = 1}^n$ which satisfy $d_{\{t_0,T\}}(\utilde{x}_i,\tilde{x}_i) \leq d_{\{t_0,T\}}(\ubar{x}_i,\bar{x}_i)$. for all $i$, provided the system has unique, continuous solutions,
and $\tilde{x}_i(t_0) = \utilde{x}_i(t_0) = \bar{x}_i(t_0) = \ubar{x}_i(t_0)$.
\end{cor}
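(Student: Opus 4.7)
The plan is to first establish the pointwise sandwich $\ubar{x}_i(t) \leq \utilde{x}_i(t) \leq \tilde{x}_i(t) \leq \bar{x}_i(t)$ for all $t \geq t_0$ and all $i$, and then to deduce the metric inequality as an immediate consequence of this chain.

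For the pointwise chain, I would argue by contradiction in direct parallel to the proof of Lemma \ref{lem:comparison}. Suppose towards contradiction that there is a first time $\tau^\prime > t_0$ at which some link in the chain fails. Focus on the link $\utilde{x}_i \geq \ubar{x}_i$ and suppose $\utilde{x}_i(\tau^\prime) < \ubar{x}_i(\tau^\prime)$ for some index $i$ (the other two links are handled symmetrically). Since the shared initial condition gives $\utilde{x}_i(t_0) = \ubar{x}_i(t_0)$ and both trajectories are continuous, there is a largest time $t^\star < \tau^\prime$ at which $\ubar{x}_i(t^\star) = \utilde{x}_i(t^\star)$; just past $t^\star$ the inequality reverses, forcing
$$\dot{\ubar{x}}_i(t^\star) \geq \dot{\utilde{x}}_i(t^\star).$$
However, the hypothesis that $\ubar{f}_i(\bar{x},\ubar{x}) \leq \utilde{f}_i(\tilde{x},\utilde{x})$ holds \emph{for all} admissible arguments — not just along the trajectories — applies in particular at $t^\star$, yielding
$$\dot{\ubar{x}}_i(t^\star) = \ubar{f}_i(\bar{x}(t^\star),\ubar{x}(t^\star)) \leq \utilde{f}_i(\tilde{x}(t^\star),\utilde{x}(t^\star)) = \dot{\utilde{x}}_i(t^\star),$$
which is incompatible with the strict separation required just after the crossing. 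The elaboration handling countably many simultaneous crossings transfers verbatim from the second paragraph of Lemma \ref{lem:comparison}'s proof, so the chain holds globally. The links $\utilde{x}_i \leq \tilde{x}_i$ and $\tilde{x}_i \leq \bar{x}_i$ follow identically from the corresponding pointwise dominance hypotheses.

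With the pointwise sandwich in hand, the metric bound is immediate. For every $t \in [t_0,T]$ and every $i$, the chain gives
$$|\tilde{x}_i(t) - \utilde{x}_i(t)| = \tilde{x}_i(t) - \utilde{x}_i(t) \leq \bar{x}_i(t) - \ubar{x}_i(t) = |\bar{x}_i(t) - \ubar{x}_i(t)|,$$
and taking the maximum of both sides over $t \in [t_0,T]$ preserves the inequality, producing $d_{\{t_0,T\}}(\utilde{x}_i,\tilde{x}_i) \leq d_{\{t_0,T\}}(\ubar{x}_i,\bar{x}_i)$.

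The main obstacle is essentially bookkeeping rather than analysis: four simultaneous trajectories and four vector fields must be tracked without losing sight of the single underlying comparison principle. No new analytic machinery is required precisely because the hypothesis is stated as a pointwise dominance on the common domain (rather than only along the generated trajectories), which is exactly what the comparison argument of Lemma \ref{lem:comparison} consumes. If one wanted a shorter route, one could package the four-trajectory system into a single enlarged system and invoke Lemma \ref{lem:comparison} directly with appropriately chosen $\bar{f}$ and $\ubar{f}$; I would mention this as an alternative but prefer the direct contradiction for clarity.
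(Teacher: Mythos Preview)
Your proposal is correct and takes essentially the same approach as the paper: establish the pointwise chain $\ubar{x}_i(t) \leq \utilde{x}_i(t) \leq \tilde{x}_i(t) \leq \bar{x}_i(t)$ via the comparison principle, then read off the metric inequality. The paper's proof is a single sentence that simply invokes the comparison result (it cites Theorem~\ref{thm:bounding_sys}, though Lemma~\ref{lem:comparison} is the intended reference) to obtain the chain and declares the conclusion immediate; you reprove the comparison argument in detail and even note the shortcut of packaging everything into one enlarged system and invoking Lemma~\ref{lem:comparison} directly---which is precisely the paper's route.
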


 \begin{proof}
 	Application of Theorem \ref{thm:bounding_sys} gives us that $\ubar{x}_i(t) \leq \utilde{x}_i(t) \leq \tilde{x}_i(t) \leq \bar{x}_i(t)$ holds for all $t \geq t_0$, and so the result follows immediately.
 \end{proof}
 
 We may use this result to examine the effect of using knowledge of non-negative or non-positive correlation in designing a better approximating system than that which is constructed in Theorem \ref{thm:bounding_sys}.
 
 \begin{theorem} \label{thm:alt_sys}
 	Recall that the covariance of two random variables $X$ and $Y$ is defined $\sigma(X,Y) = \mathbb{E}[XY] - \mathbb{E}[X] \mathbb{E}[Y]$.  Now, suppose that for all $i, j \in [n]$ and $c, c^\prime \in \comps$, we have $\sigma(\mathbbm{1}_{\{x_i(t) = c \}},\mathbbm{1}_{\{x_j(t) = c^\prime \}}) \geq 0$.  Then, 
 	\begin{equation} \label{ineq:nonneg_corr}
 		\Pr(x_i(t) = c) \Pr(x_j(t) = c^\prime) \leq \min\{\Pr(x_i(t) = c), \Pr(x_j(t) = c^\prime)\} -
 		\sigma(\mathbbm{1}_{\{x_i(t) = c \}}, \mathbbm{1}_{\{x_j(t) = c^\prime \}}),
 	\end{equation}
 	holds, and the bounding system given by
 	\begin{equation} \label{eq:corr_sys}
 		\begin{aligned}
 			\dot{\tilde{p}}_i^{\{s\}}
 			= &\sum_{c^\prime \in  \mathcal{E}(c)} \sum_{\tilde{c} \in  \mathcal{A}(c^\prime, c)} \sum_{j \in V} (1-\tilde{p}_i^{ \{c\}}) (\tilde{p}_j^{\{c\}})  \beta^{\{\tilde{c}; \; c^\prime \rightarrow c\}}_{ij}
 			- \sum_{c^\prime \in  \mathcal{E}(c)} \sum_{\tilde{c} \in  \mathcal{A}(c, c^\prime)} \sum_{j \in V} \max\{0,\tilde{p}_i^{\{c\}} + \utilde{p}_j^{\{\tilde{c}\}} - 1\} \beta^{\{\tilde{c}; \;c \rightarrow c^\prime\}}_{ij} \\
 			&+\sum_{c^\prime \in  \mathcal{I}(c)} (1 - \tilde{p}_i^{\{c\}}) \delta_i^{\{c^\prime \rightarrow c\}}
 			- \tilde{p}_i^{\{c\}} \delta_i^{\{c \rightarrow c^\prime \}}\\
 			\dot{\utilde{p}}_i^{\{c\}}
 			=&\sum_{c^\prime \in  \mathcal{E}(c)} \sum_{\tilde{c} \in  \mathcal{A}(c^\prime, c)} \sum_{j \in V} \max\{0, \utilde{p}_i^{\{\tilde{c}\}} + \utilde{p}_j^{\{c\}} - 1\} \beta^{\{\tilde{c}; \; c^\prime \rightarrow c\}}_{ij} - \sum_{c^\prime \in  \mathcal{E}(c)} \sum_{\tilde{c} \in  \mathcal{A}(c, c^\prime)} \sum_{j \in V}(\utilde{p}_i^{\{c\}})(\tilde{p}_j^{\{\tilde{c}\}}) \beta^{\{\tilde{c}; \; c \rightarrow c^\prime\}}_{ij} \\
 			+&\sum_{c^\prime \in  \mathcal{I}(c)}  \utilde{p}_i^{\{c^\prime\}} \delta_i^{\{c^\prime \rightarrow c\}}
 			- \utilde{p}_i^{\{c\}} \delta_i^{\{c \rightarrow c^\prime \}}\\
 		\end{aligned}
 	\end{equation}
 	engenders solutions which satisfy $d_{\{t_0,T\}}(\tilde{p}_i^{\{c\}},\utilde{p}_i^{\{c\}}) \leq d_{\{t_0,T\}}(\bar{p}_i^{\{c\}},\ubar{p}_i^{\{c\}})$ for all $i \in [n]$, $c \in \comps$ and all choices of $T \geq t_0$, where $\bar{p}_i^{\{c\}}$ and $\ubar{p}_i^{\{c\}}$ are the solutions of the bounding system given by \eqref{eq:bounding}.
 \end{theorem}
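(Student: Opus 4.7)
The plan is to split the proof into two parts: establishing the arithmetic inequality \eqref{ineq:nonneg_corr} directly, and then obtaining the gap comparison by invoking Corollary \ref{cor:tight} after verifying a pointwise sandwich between the right-hand sides of \eqref{eq:corr_sys} and \eqref{eq:bounding}.

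For \eqref{ineq:nonneg_corr}, I would simply apply the definition of covariance for indicator random variables together with the upper Fr\'{e}chet bound \eqref{eq:frechet}. Specifically, $\sigma(\mathbbm{1}_{\{x_i=c\}}, \mathbbm{1}_{\{x_j=c'\}}) = \Pr(x_i=c, x_j=c') - \Pr(x_i=c)\Pr(x_j=c')$ rearranges to $\Pr(x_i=c)\Pr(x_j=c') + \sigma = \Pr(x_i=c, x_j=c')$, and the upper half of \eqref{eq:frechet}, namely $\Pr(x_i=c, x_j=c') \leq \min\{\Pr(x_i=c),\Pr(x_j=c')\}$, yields the claim after subtracting $\sigma$. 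This step is purely algebraic and does not use $\sigma \geq 0$; the non-negativity hypothesis is what gives the inequality meaningful content (ensuring that the product $\Pr(x_i=c)\Pr(x_j=c')$ is strictly at least as tight an upper bound as the Fr\'{e}chet minimum).

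For the gap comparison, I would compare the two systems term by term. The only differences between \eqref{eq:corr_sys} and \eqref{eq:bounding} are in the external contributions: in the upper dynamic the term $\min\{(1-\bar{p}_i^{\{c\}}),\bar{p}_j^{\{c\}}\}$ is replaced by the product $(1-\tilde{p}_i^{\{c\}})(\tilde{p}_j^{\{c\}})$, and in the lower dynamic $\min\{\ubar{p}_i^{\{c\}},\bar{p}_j^{\{\tilde{c}\}}\}$ is replaced by $(\utilde{p}_i^{\{c\}})(\tilde{p}_j^{\{\tilde{c}\}})$; all internal terms are identical. After verifying (by an argument analogous to the second half of the proof of Theorem \ref{thm:bounding_sys}) that the trajectories of \eqref{eq:corr_sys} also remain in $[0,1]$, the elementary inequality $xy \leq \min\{x,y\}$ for $x,y \in [0,1]$ gives the pointwise ordering: the upper dynamic's positive external term in \eqref{eq:corr_sys} is no larger than the corresponding term in \eqref{eq:bounding}, and the lower dynamic's negative external term in \eqref{eq:corr_sys} is no smaller. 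This verifies the sandwich hypothesis of Corollary \ref{cor:tight}, which then yields $\ubar{p}_i^{\{c\}}(t) \leq \utilde{p}_i^{\{c\}}(t) \leq \tilde{p}_i^{\{c\}}(t) \leq \bar{p}_i^{\{c\}}(t)$ for all $t \geq t_0$, and the desired gap inequality follows immediately from this ordering.

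The main obstacle I anticipate is verifying the sandwich condition required by Corollary \ref{cor:tight} over the full domain rather than merely along the trajectories in question, since the corollary's hypothesis asks for the inequality between vector fields to hold independently of the arguments. This is manageable here because the governing arithmetic fact $xy \leq \min\{x,y\}$ holds uniformly on $[0,1]^2$ and the internal contributions to the dynamics coincide exactly between the two systems, so the comparison collapses to a single scalar check on the external terms, performed uniformly over the (invariant) state space.
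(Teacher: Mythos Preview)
Your proposal is correct and follows essentially the same approach as the paper: derive \eqref{ineq:nonneg_corr} from the definition of covariance and the upper Fr\'{e}chet bound, then invoke Corollary~\ref{cor:tight} to obtain the gap comparison. The paper's proof is extremely terse (two sentences), and your elaboration---using the scalar inequality $xy \leq \min\{x,y\}$ on $[0,1]^2$ to verify the vector-field sandwich uniformly, and checking invariance of $[0,1]$ for \eqref{eq:corr_sys}---fills in exactly the details the paper leaves implicit.
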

 
 \begin{proof}
 	The inequality \eqref{ineq:nonneg_corr} follows from the Fr\'{e}chet inequalities and the definition of covariance.  Application of \eqref{ineq:nonneg_corr} shows that \eqref{eq:corr_sys} satisfies the hypothesis of Theorem \ref{cor:tight}, and so the claim follows immediately.
 \end{proof}
 
 \begin{remark}[Correlations Between Some Variables] \label{rem:correlations}
 {\rm
 	It is easy to show that a similar result holds when not all pairs of variables are non-negatively correlated: simply leave the Fr\'{e}chet bound substitutions intact wherever necessary.  A similar result holds for pairs of non-positively correlated variables, where the product is substituted for the appropriate Fr\'{e}chet bound.  Moreover, \emph{any} \emph{a priori} knowledge of non-negative (or non-positive) correlation allows for the construction of an approximating system which is better with respect to $d_{\{t_0,T\}}$.\oprocend
 	}
 \end{remark}
 
 Theorem \ref{thm:alt_sys} gives us a rigorous condition under which better bounding systems can be constructed.  Whenever we have \emph{a priori} knowledge of the correlation of variable pairs, we may use it to construct a better system.  In fact, application of the Fr\'{e}chet inequalities are, in effect, making an assumption that the correlation between variable pairs are \emph{as bad as possible}, and so avoid the need for \emph{a priori} knowledge.  We may formalize this by noting that the upper bound is tight whenever the variable pair realizes maximal positive covariance (i.e. $\sigma(\mathbbm{1}_{\{x_i(t) = c\}}, \mathbbm{1}_{\{x_j(t) = c^\prime\}}) = 1$ for all $t$), and the lower bound is tight whenever the variable pair realizes maximal negative covariance (i.e. $\sigma(\mathbbm{1}_{\{x_i(t) = c\}}, \mathbbm{1}_{\{x_j(t) = c^\prime\}}) = -1$ for all $t$).

\section{Example applications} \label{sec:app}
Having established our main technical contributions, we now demonstrate the utility of the developed results by applying them to several representative spreading processes.  We begin by demonstrating that no approximation is necessary for fully internal processes (Section \ref{subsec:internal}), then continue by studying bounding approximations to Susceptible-Infected-Removed ($SIR$; Section \ref{subsec:SIR}), Susceptible-Infected-Susceptible ($SIS$; Section \ref{subsec:SIS}), Susceptible-Infected 1-Susceptible-Infected 2-Susceptible ($S I_1 S I_2 S$; Section \ref{subsec:SISIS}), and Susceptible-Exposed-Infected-Vigilant ($SEIV$; Section \ref{subsec:SEIV}) processes.

\subsection{Fully internal processes} \label{subsec:internal}


As a first example,we will consider a process in which all transitions are internal.  We note that that the dynamic bounds \eqref{eq:bound} which describe the system reduce to 
\begin{equation}
	\begin{aligned}
		&\frac{d p_i^{\{c\}}}{d t} \geq
		&\sum_{c^\prime \in  \mathcal{I}(c)}  p_i^{\{c^\prime\}} \delta_i^{\{c^\prime \rightarrow c\}}
		- \sum_{c^\prime \in  \mathcal{I}(c)} p_i^{\{c\}} \delta_i^{\{c \rightarrow c^\prime \}}\\				
		&\frac{d p_i^{\{c\}}}{d t} \leq
		&\sum_{c^\prime \in  \mathcal{I}(c)}  p_i^{\{c^\prime\}} \delta_i^{\{c^\prime \rightarrow c\}}
		- \sum_{c^\prime \in  \mathcal{I}(c)} p_i^{\{c\}} \delta_i^{\{c \rightarrow c^\prime \}}
	\end{aligned}
\end{equation}

Hence, we may consider the exact dynamics
\begin{equation}
	\dot{p}_i^{\{c\}} = \sum_{c^\prime \in  \mathcal{I}(c)}  p_i^{\{c^\prime\}} \delta_i^{\{c^\prime \rightarrow c\}}
			- \sum_{c^\prime \in  \mathcal{I}(c)} p_i^{\{c\}} \delta_i^{\{c \rightarrow c^\prime \}}
\end{equation}
directly, with no need for approximation.  This demonstrates conclusively that the complexity of spreading processes arises from the coupling effect of external transitions - our method of approximation leads back to an exact representation in this case.  This is a worthwhile, if obvious, result.

\subsection{$SIR$} \label{subsec:SIR}
The Susceptible-Infected-Removed (pictured in Figure \ref{fig:SIR}; see, e.g., \cite{keeling2005networks}) epidemic process has $|\comps| = 3$, where the three possible compartments an agent may take are `Susceptible,' ($S$), `Infected,' ($I$), and `Removed,' ($R$).  We allow the transitions $\{S \goesto I\}$ to be external with rate $\lambda_i^{\{S \goesto I\}}(t) = \sum_{j \in V} \mathbbm{1}_{\{x_j(t) = I\}} \beta_{ij}^{\{I; \; S \goesto I\}}$ and the transitions $\{I \goesto R\}$ to occur with rate $\delta_i^{\{I \goesto R\}}$.  We do not allow any other transitions.  In particular, once an agent has attained state $R$, it will remain there \emph{ad infinitum} - this allows for the modeling of lasting immunity after initial infection.

It is easy to see that the bounding system \eqref{eq:bounding} applies here, but we may apply Theorem \ref{thm:alt_sys} and the fact that $\mathbbm{1}_{\{x_i(t) = S\}}$ and  $\mathbbm{1}_{\{x_j(t) = I\}}$ are non-negatively correlated \cite{cator2014nodal} to construct the \emph{ad hoc} approximating system:
\begin{equation} \label{eq:SIR_dyn}
	\begin{aligned}
	\dot{\bar{p}}_i^{\{S\}} &= - \sum_{j \in V} \max\{0,\bar{p}_i^{\{S\}} + \ubar{p}_j^{\{I\}} - 1\} \beta_{ij}^{\{I; \, S \goesto I\}}\\
	\dot{\ubar{p}}_i^{\{S\}} &= - \sum_{j \in V} \ubar{p}_i^{\{S\}} \bar{p}_j^{\{I\}} \beta_{ij}^{\{I; \, S \goesto I\}}\\
	\dot{\bar{p}}_i^{\{I\}} &= \sum_{j \in V} (1 - \bar{p}_i^{\{I\}}) \bar{p}_j^{\{I\}} \beta_{ij}^{\{I; \, S \goesto I\}} - \bar{p}_i^{\{I\}} \delta_{i}^{\{I \goesto R\}} \\
	\dot{\ubar{p}}_i^{\{I\}} &= \sum_{j \in V} \max\{0,\ubar{p}_i^{\{S\}} + \ubar{p}_j^{\{I\}} - 1\} \beta_{ij}^{\{I; \, S \goesto I\}} - \ubar{p}_i^{\{I\}} \delta_{i}^{\{I \goesto R\}} \\
	\dot{\bar{p}}_i^{\{R\}} &= (1 - \bar{p}_i^{\{R\}}) \delta_{i}^{\{I \goesto R\}}\\
	\dot{\ubar{p}}_i^{\{R\}} &= \ubar{p}_i^{\{I\}} \delta_{i}^{\{I \goesto R\}}
	\end{aligned}
\end{equation}

which, by Corollary \ref{cor:tight}, must be as good an approximation as \eqref{eq:bounding} with respect to $d_{\{t_0,T\}}$, and in simulation appears to be better.

\begin{figure}[h]
\centering
	\begin{tikzpicture}[->, thick]
		\begin{scope}[shift={(1,1)}]			
		\node[circle,very thick,draw=black,opacity=0.75] (S) at (0,0) {S};
		\node[circle,very thick,draw=black,opacity=0.75] (I) at (2.5,0) {I};
		\node[circle,very thick,draw=black,opacity=0.75] (R) at (5,0) {R};
		\path [->,very thick, dashed] (S) edge node[xshift = 0.1cm, yshift = 0.3cm, dashed] {} (I);
		\node[draw=black,dashed,fit=(I) ,inner sep=0.1ex,ellipse] (Icirc) {};
		\path [->,very thick] (I) edge node[xshift = 0.3cm, yshift = 0.3cm] {} (R);
		\end{scope}
	\end{tikzpicture}
	\caption{The compartmental diagram of $SIR$.  The dashed lines indicate external transitions; the solid lines indicate internal transitions.} \label{fig:SIR}
\end{figure}

{
	\psfrag{0}[cc][cc]{\tiny 0}
		\psfrag{0.005}[cc][cc]{\tiny 5}
		\psfrag{0.05}[cc][cc]{\tiny 100}
		\psfrag{.01}[cc][cc]{\tiny 150}
		\psfrag{200}[cc][cc]{\tiny 200}
		\psfrag{250}[cc][cc]{\tiny 250}
		\psfrag{300}[cc][cc]{\tiny 300}
		\psfrag{300}[cc][cc]{\tiny 20}
		\psfrag{300}[cc][cc]{\tiny 40}
		\psfrag{300}[cc][cc]{\tiny 60}
		\psfrag{300}[cc][cc]{\tiny 80}
		\psfrag{300}[cc][cc]{\tiny 100}
		\psfrag{300}[cc][cc]{\tiny 120}
\begin{figure}[h]
	\centering
	\begin{subfigure}[b]{0.5\textwidth}
		\includegraphics[width =\textwidth]{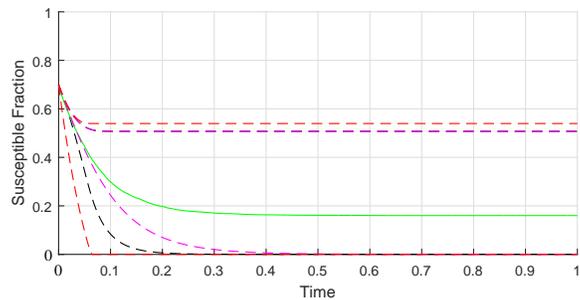}
		\caption{Expected Susceptible Fraction}
	\end{subfigure}
	~
	\begin{subfigure}[b]{0.5\textwidth}
		\includegraphics[width =\textwidth]{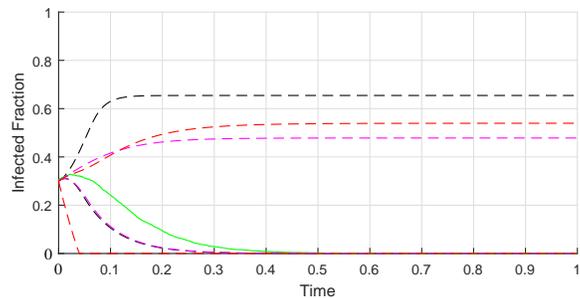}
		\caption{Expected Infected Fraction}
	\end{subfigure}
	~
	\begin{subfigure}[b]{0.5\textwidth}
		\includegraphics[width =\textwidth]{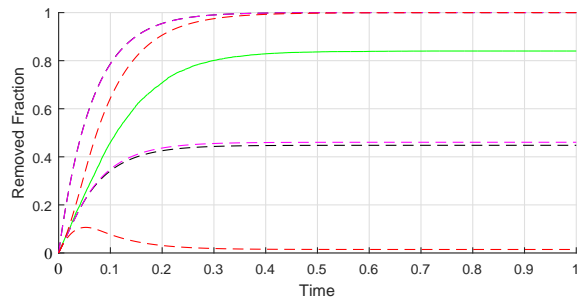}
		\caption{Expected Removed Fraction}
	\end{subfigure}
	\caption{Simulated Dynamics for the SIR Process on a 100-node random graph with connection probability p = 0.2.  The black dashed lines are the trajectories of the bounding system \eqref{eq:bounding}; the red dashed lines are the result of applying Corollary \ref{cor:eliminating} to the trajectories of \eqref{eq:bounding}; the magenta dashed lines are the trajectories of \eqref{eq:SIR_dyn};the green solid line is generated from a 100-trial Monte Carlo simulation of the process.  Note that all trajectories given are the average compartmental membership probability approximations for the graph.} \label{fig:SIR_sim}
\end{figure}}


We note the results of a Monte Carlo simulation of the exact process as compared to the trajectories of the bounding system, which are given in Figure \ref{fig:SIR_sim}.  From this we can see that the modeled process dynamics behave expected: the crude bounding system \eqref{eq:bounding} gives the loosest approximation, the system leveraging non-negative correlation \eqref{eq:SIR_dyn} gives a better approximation, and the expected trajectory lies neatly between the over- and under-approximating trajectories of the bounding system.  We note also that the trajectory of the original process is ``tight,'' to the upper and lower bounding trajectories in some domain for each process compartment.  This is perhaps an indication that the studied bounding systems are close to as good as one can expect for this process, though making a formal claim to this end is difficult, and so this observation must be treated as a heuristic claim.  In particular, it is in general difficult to predict \emph{a priori} for which values in the domain a particular bounding trajectory will be tight.

\subsection{$SIS$} \label{subsec:SIS}
The Susceptible-Infected-Susceptible ($SIS$) process (pictured in Figure \ref{fig:SIS}; see, e.g., \cite{van2009virus}) epidemic process has $|\comps| = 2$, where the two possible compartments an agent may take are `Susceptible,' ($S$) and `Infected,' ($I$).  We allow the transitions $\{S \goesto I\}$ to be external with rate $\lambda_{i \, (t)}^{\{S \goesto I\}} = \sum_{j \in V} \mathbbm{1}_{\{x_j(t) = I\}} \beta_{ij}^{\{I; \; S \goesto I\}}$ and the transitions $\{I \goesto S\}$ to occur with rate $\delta_i^{\{I \goesto S\}}$.  This allows for the modeling of diseases in which infection may be \emph{recurrent}, i.e. there is no mechanism for immunity (or removal) as in $SIR$.

Some recent literature (see, e.g., \cite{preciado2014optimal}) has been concerned with controlling the $SIS$ process via its mean-field approximation.  Though an argument can be made from the combined works of \cite{van2009virus} and \cite{cator2014nodal} that the mean-field approximation of $SIS$ is an upper-bound on the first moment of the process, we can use our bounding system framework to study the interrelation of the bounding system method developed here and the standard mean-field approximation, as they relate to control of the process.

It is easy to note that the bounding system \eqref{eq:bounding} applies here.  However, we may leverage the non-negative correlation of $\mathbbm{1}_{\{x_i(t) = I\}}$ and $\mathbbm{1}_{\{x_j(t) = I\}}$ for all $(i,j) \in V \times V$ established in \cite{cator2014nodal} to write three \emph{ad hoc} approximating systems for the $SIS$ process:
\begin{equation} \label{eq:SIS_dyn}
	\begin{cases}
		\dot{\bar{p}}_i^{\{I\}} &= \sum_{j \in V} \beta_{ij}^{\{I; \; S \goesto I\}} (1 - \bar{p}_i^{\{I\}})\bar{p}_j^{\{I\}} - \bar{p}_i^{\{I\}} \delta_i^{\{I \goesto S\}}\\
		\dot{\ubar{p}}_i^{\{I\}} &= \sum_{j \in V} \beta_{ij}^{\{I; \; S \goesto I\}} \max\{0,\ubar{p}_i^{\{S\}} + \ubar{p}_j^{\{I\}} - 1\} - \ubar{p}_i^{\{I\}} \delta_i^{\{I \goesto S\}}\\
		\dot{\bar{p}}_i^{\{S\}} &= -\sum_{j \in V} \beta_{ij}^{\{I; \; S \goesto I\}} \max\{0,\bar{p}_i^{\{S\}} + \ubar{p}_j^{\{I\}} - 1\} + (1-\bar{p}_i^{\{S\}}) \delta_i^{\{I \goesto S\}}\\
		\dot{\ubar{p}}_i^{\{S\}} &= -\sum_{j \in V} \beta_{ij}^{\{I; \; S \goesto I\}} \ubar{p}_i^{\{S\}} \bar{p}_j^{\{I\}} + (1-\bar{p}_i^{\{S\}}) \delta_i^{\{I \goesto S\}},
	\end{cases}
\end{equation}

\begin{equation} \label{eq:SIS_2}
	\begin{cases}
		\dot{\bar{p}}_i^{\{I\}} &= \sum_{j \in V} \beta_{ij}^{\{I; \; S \goesto I\}} (1 - \bar{p}_i^{\{I\}})\bar{p}_j^{\{I\}} - \bar{p}_i^{\{I\}} \delta_i^{\{I \goesto S\}}\\
		\dot{\ubar{p}}_i^{\{I\}} &= \sum_{j \in V} \beta_{ij}^{\{I; \; S \goesto I\}} \max\{0,\ubar{p}_j^{\{I\}} - \ubar{p}_i^{\{I\}}\} - \ubar{p}_i^{\{I\}} \delta_i^{\{I \goesto S\}}\\
		\bar{p}_i^{\{S\}} &= 1 - \ubar{p}_i^{\{I\}}\\
		\ubar{p}_i^{\{S\}} &= 1 - \bar{p}_i^{\{I\}},
	\end{cases} \hspace*{11.5ex}
\end{equation}
and 
\begin{equation} \label{eq:SIS_3}
	 \begin{cases}
		\dot{\bar{p}}_i^{\{S\}} &= -\sum_{j \in V} \beta_{ij}^{\{I; \; S \goesto I\}} \max\{0,\bar{p}_i^{\{S\}} - \bar{p}_j^{\{S\}} \} + (1-\bar{p}_i^{\{S\}}) \delta_i^{\{I \goesto S\}}\\
		\dot{\ubar{p}}_i^{\{S\}} &= -\sum_{j \in V} \beta_{ij}^{\{I; \; S \goesto I\}} \ubar{p}_i^{\{S\}} (1 - \ubar{p}_j^{\{S\}}) + (1-\bar{p}_i^{\{S\}}) \delta_i^{\{I \goesto S\}}\\
		\bar{p}_i^{\{I\}} &= 1 - \ubar{p}_i^{\{S\}}\\
		\ubar{p}_i^{\{I\}} &= 1 - \bar{p}_i^{\{S\}}.
	\end{cases} \hspace*{3ex}
\end{equation}

Note the equations which govern the dynamics of $\bar{p}_i^{\{I\}}$ in \eqref{eq:SIS_dyn} and \eqref{eq:SIS_2} are identical to those of the standard mean-field approximation \cite{van2009virus}.  This is an important fact.  Since the mean-field trajectories are trajectories of a bounding system, the work done in control of $SIS$ via mean-field approximation (see, e.g. \cite{preciado2014optimal}) is valid: controlling the mean-field approximation to a disease-free state must also control the first moment of the process to $0$.  However, taken together, the states of \eqref{eq:SIS_dyn} and \eqref{eq:SIS_2} provide a complete picture of the system's expected evolution, with upper and lower bounds on the probability of membership of both compartments.  This is an advantage of this approach over the standard mean-field approximation, and one which has not yet been studied in the literature.

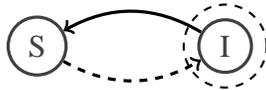
\begin{figure}[h]
\centering
		\begin{tikzpicture}[->, thick]
			\begin{scope}[shift={(1,1)}]			
			\node[circle,very thick,draw=black,opacity=0.75] (S) at (.5,-2.5) {S};
			\node[circle,very thick,draw=black,opacity=0.75] (I) at (3,-2.5) {I};
			\path [->,very thick] (S) edge [bend right, dashed] node[below] {} (I);
			\node[draw=black,dashed,fit=(I) ,inner sep=0.1ex,ellipse] (Icirc) {};
			\path [->,very thick] (I) edge [bend right] node[xshift = -0.0cm, yshift = 0.3cm] {} (S);
			\end{scope}
		\end{tikzpicture}
		\caption{The compartmental diagram of $SIS$. The dashed lines indicate external transitions; the solid lines indicate internal transitions.} \label{fig:SIS}
\end{figure}
We distinguish two cases of interest in considering simulations of the process: (i) the rates are chosen such that an endemic (i.e. non-zero infection) steady-state exists in the mean-field approximation, and (ii) the rates are chosen such that the mean-field approximation dies exponentially quickly.  A simulation of case (i) is given in Figure \ref{fig:SIS_sim}; a simulation of case (ii) is given in Figure \ref{fig:SIS_sim_controlled}.

\begin{figure}[h]
	\centering
	\begin{subfigure}[b]{0.5\textwidth}
		\includegraphics[width =\textwidth]{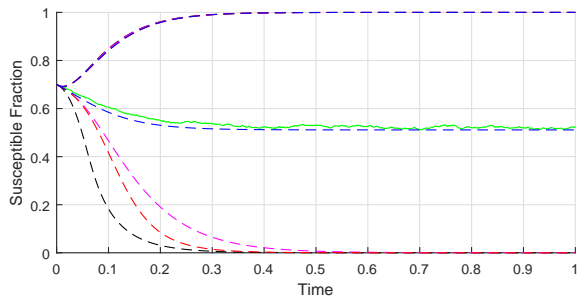}
		\caption{Expected Susceptible Fraction}
	\end{subfigure}
	~
	\begin{subfigure}[b]{0.5\textwidth}
		\includegraphics[width =\textwidth]{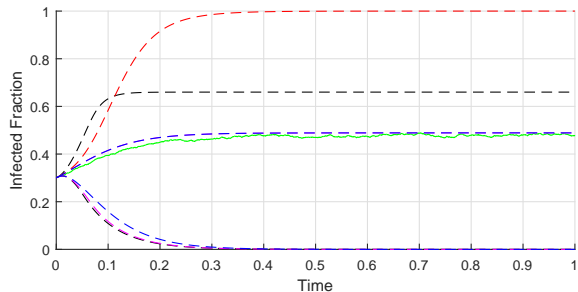}
		\caption{Expected Infected Fraction}
	\end{subfigure}
	\caption{Simulated Dynamics for the SIS Process on a 100-node random graph with connection probability p = 0.2, and rates chosen for the mean-field approximation to attain an endemic steady-state.  The black dashed lines are the trajectories of the bounding system \eqref{eq:bounding}; the magenta dashed lines are the trajectories of \eqref{eq:SIS_dyn}; the blue dashed lines are the trajectories of \eqref{eq:SIS_2}; the red dashed lines are the trajectories of \eqref{eq:SIS_3}; the green solid line is generated from a 100-trial Monte Carlo simulation of the $SIS$ process.  Note that all trajectories given are the average compartmental membership probability approximations for the graph.  Note that the trajectories of \eqref{eq:SIS_dyn} are no worse of an approximation than the trajectories of \eqref{eq:bounding} at any time: this can be verified analytically by an application of Corollary \ref{cor:tight}.} \label{fig:SIS_sim}
\end{figure}

The notable qualities of both figures are essentially the same.  It is clear to see that every specified bounding system does, in fact, bound the evolution of the compartmental membership probabilities appropriately.  It is interesting to note that the trajectories generated by \eqref{eq:SIS_2} appear to perform better with respect to $d_{\{t_0,T\}}$ than all other systems, although it is difficult to verify that this is the case analytically: the hypothesis of Corollary \ref{cor:tight} fails.  Figure \ref{fig:SIS_sim_controlled} demonstrates that the specified bounding systems can perform quite well in some cases: the gap between the upper- and lower- bounding trajectories is no larger than $0.1$ at any point in the system's evolution.

\begin{figure}[h]
	\centering
	\begin{subfigure}[b]{0.5\textwidth}
		\includegraphics[width =\textwidth]{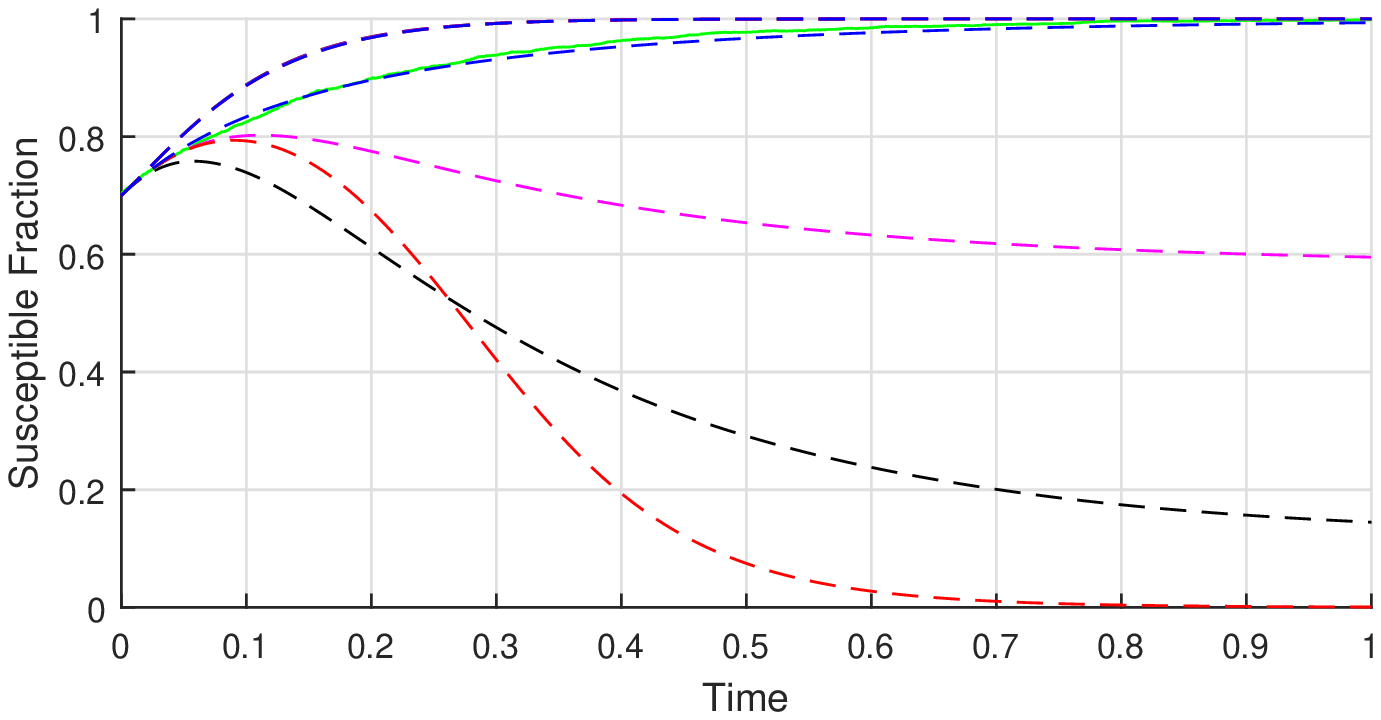}
		\caption{Expected Susceptible Fraction}
	\end{subfigure}
	~
	\begin{subfigure}[b]{0.5\textwidth}
		\includegraphics[width =\textwidth]{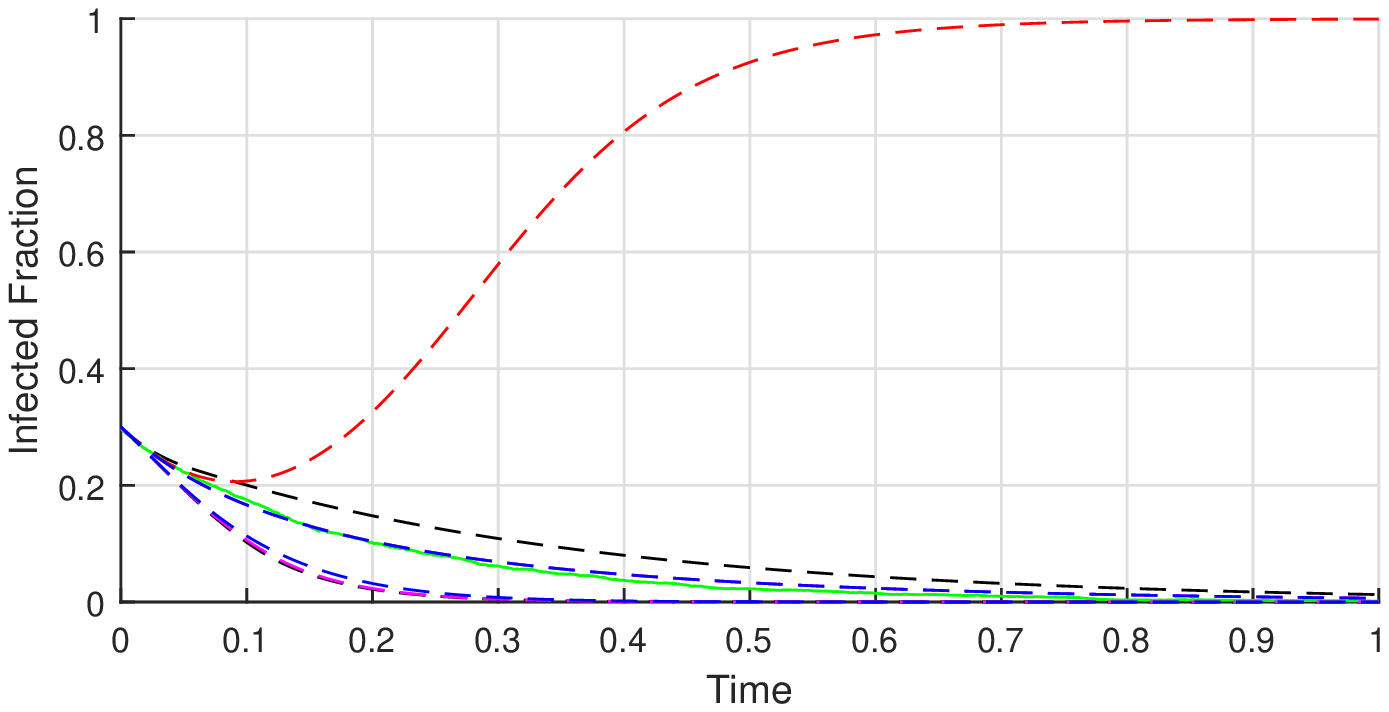}
		\caption{Expected Infected Fraction}
	\end{subfigure}
	\caption{Simulated Dynamics for the SIS Process on a 100-node random graph with connection probability p = 0.2, and rates chosen for the mean-field approximation to decay exponentially.  The black dashed lines are the trajectories of the bounding system \eqref{eq:bounding}; the magenta dashed lines are the trajectories of \eqref{eq:SIS_dyn}; the blue dashed lines are the trajectories of \eqref{eq:SIS_2}; the red dashed lines are the trajectories of \eqref{eq:SIS_3}; the green solid line is generated from a 100-trial Monte Carlo simulation of the $SIS$ process.  Note that all trajectories given are the average compartmental membership probability approximations for the graph.  Note that the trajectories of \eqref{eq:SIS_dyn} are no worse of an approximation than the trajectories of \eqref{eq:bounding} at any time: this can be proven by application of Corollary \ref{cor:tight}} \label{fig:SIS_sim_controlled}
\end{figure}

\subsection{$SI_{1}S I_{2}S$} \label{subsec:SISIS}
The Susceptible - Infected 1 - Susceptible - Infected 2 - Susceptible ($SI_{1}SI_{2}S$) process (pictured in Figure \ref{fig:SISIS}; see, e.g., \cite{sahneh2014competitive}) epidemic process has $|\comps| = 3$, where the three possible compartments an agent may take are `Susceptible,' ($S$), `Infected 1,' ($I_{1}$) and `Infected 2,' ($I_{2}$).   We allow the transitions $\{S \goesto I_{1}\}$ to be external with rate $\lambda_i^{\{S \goesto I_{1}\}}(t) = \sum_{j \in V} \mathbbm{1}_{\{x_j(t) = I_{1}\}} \beta_{ij}^{\{I_1;\; S \goesto I_{1}\}}$, the transitions $\{S \goesto I_{2}\}$ to be external with rate $\lambda_i^{\{S \goesto I_{2}\}}(t) = \sum_{j \in V} \mathbbm{1}_{\{x_j(t) = I_{2}\}} \beta_{ij}^{\{I_2; \; S \goesto I_{2}\}}$, the transitions $\{I_{1} \goesto S\}$ to occur at a rate $\delta_{i}^{\{I_1 \goesto S\}}$ and the transitions $\{I_{2} \goesto S\}$ to occur at a rate $\delta_{i}^{\{I_2 \goesto S\}}$.  We allow no further transitions.  This model allows us to consider a situation in which an agent may belong to one of two factions ($I_1$ or $I_2$), or be undecided $S$.  The standard mean-field approximation for this process can be derived by applying results in \cite{sahneh2013generalized}, and is given as:
\begin{equation} \label{eq:SISIS_mf}
	\begin{aligned}
		\dot{\phi_i}^{\{I_1\}} &= (1 - \phi_i^{\{I_1\}} - \phi_i^{\{I_2\}}) \sum_{j \in V} \beta_{ij}^{\{I_1;S \goesto I_1\}}\phi_j^{\{I_2\}} - \delta_i^{\{I_1 \goesto S\}}\phi_i^{\{I_1\}},\\
		\dot{\phi_i^{\{I_2\}}} &= (1 - \phi_i^{\{I_1\}} - \phi_i^{\{I_2\}}) \sum_{j \in V} \beta_{ij}^{\{I_2;S \goesto I_1\}}\phi_j^{\{I_2\}} - \delta_i^{\{I_2 \goesto S\}}\phi_i^{\{I_2\}},\\
		\phi_i^{\{S\}} &= (1 - \phi_i^{\{I_1\}} - \phi_i^{\{I_2\}}),
	\end{aligned}
\end{equation}
where $\phi_i^{\{I_1\}}$ is the mean-field approximation of the probability that node $i$ is in compartment $I_1$, $\phi_i^{\{I_2\}}$ is the mean-field approximation of the probability that node $i$ is in compartment $I_2$, and $\phi_i^{\{S\}}$ is the mean-field approximation of the probability that node $i$ is in compartment $S$.

Some recent work \cite{watkins2015optimal} has been concerned with designing the transition rates of the network so as to attain a specified steady-state of the mean-field dynamics \eqref{eq:SISIS_mf}, however there is no work to the authors' knowledge relating \eqref{eq:SISIS_mf} to the moment dynamics of $S I_1 S I_2 S$.  Hence, it is interesting to consider simulations of the process in two cases: (i) the mean-field approximation of $I_1$ and $I_2$ equilibrate to an endemic state, and (ii) the mean-field approximation of $I_1$ decays exponentially to $0$, while the $I_2$ is remains uncontrolled.  Note that case (ii) is the control problem studied in \cite{watkins2015optimal}.

\begin{figure}[h]
\centering
\begin{tikzpicture}
	\node[circle,very thick,draw=black,opacity=0.75] (IA) at (-2,0) {$I_1$};
	\node[circle,very thick,draw=black,opacity=0.75] (IB) at (2,0) {$I_2$};
	\node[circle,very thick,draw=black,opacity=0.75] (S) at (0,0) {S};
	\node[draw=black,dashed,fit=(IA) ,inner sep=0.1ex,ellipse] (I1circ) {};
	\node[draw=black,dashed,fit=(IB) ,inner sep=0.1ex,ellipse] (I2circ) {};
	
	\path [->,very thick,dashed] (S) edge [bend left] node[xshift=-0.2cm,yshift=-0.3cm] {} (IA);
	\path [->,very thick] (IA) edge [bend left] node[xshift=0.05cm, yshift=0.3cm] 
	{} (S);
	\path [->,very thick,dashed] (S) edge [bend right] node[xshift=0.2cm,yshift=-0.3cm] {} (IB);
	\path [->,very thick] (IB) edge [bend right] node[xshift=0.05cm, yshift=0.3cm] 
	{} (S);
\end{tikzpicture}
\caption{The compartmental diagram of $S I_1 S I_2 S$. The dashed lines indicate external transitions; the solid lines indicate internal transitions.} \label{fig:SISIS}
\end{figure}

\begin{figure}[h]
	\centering
	\begin{subfigure}[b]{0.5\textwidth}
		\includegraphics[width =\textwidth]{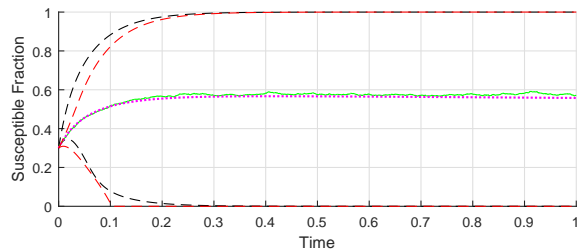}
		\caption{Expected Susceptible Fraction}
	\end{subfigure}
	~
	\begin{subfigure}[b]{0.5\textwidth}
		\includegraphics[width =\textwidth]{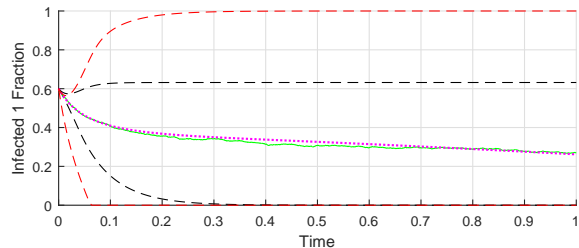}
		\caption{Expected Infected 1 Fraction}
	\end{subfigure}
	~
	\begin{subfigure}[b]{0.5\textwidth}
		\includegraphics[width =\textwidth]{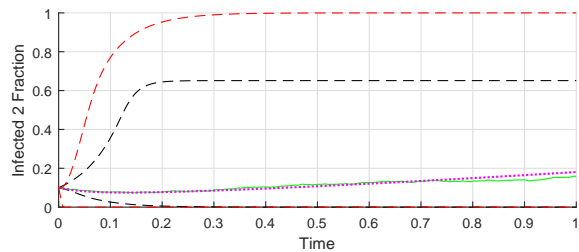}
		\caption{Expected Infected 2 Fraction}
	\end{subfigure}
	\caption{Simulated Dynamics for the $S I_1 S I_2 S$ Process on a 100-node random graph with connection probability p = 0.2 with both mean-field approximations of $I_1$ and $I_2$ surviving in an endemic steady state.  The black dashed lines are the trajectories of the bounding system \eqref{eq:bounding}; the red dashed lines are the trajectories resulting from the application of Corollary \ref{cor:eliminating} to the trajectories of \eqref{eq:bounding}; the magenta dotted lines are the trajectories of the mean-field approximation \eqref{eq:SISIS_mf}; the green solid line is generated from a 100-trial Monte Carlo simulation of the process.  Note that all trajectories given are the average compartmental membership probability approximations for the graph.  Note that the mean-field approximation \eqref{eq:SISIS_mf} does not systematically over or under approximate the states.} \label{fig:SISIS_sim} 
\end{figure}

\begin{figure}[h]
	\centering
	\begin{subfigure}[b]{0.5\textwidth}
		\includegraphics[width =\textwidth]{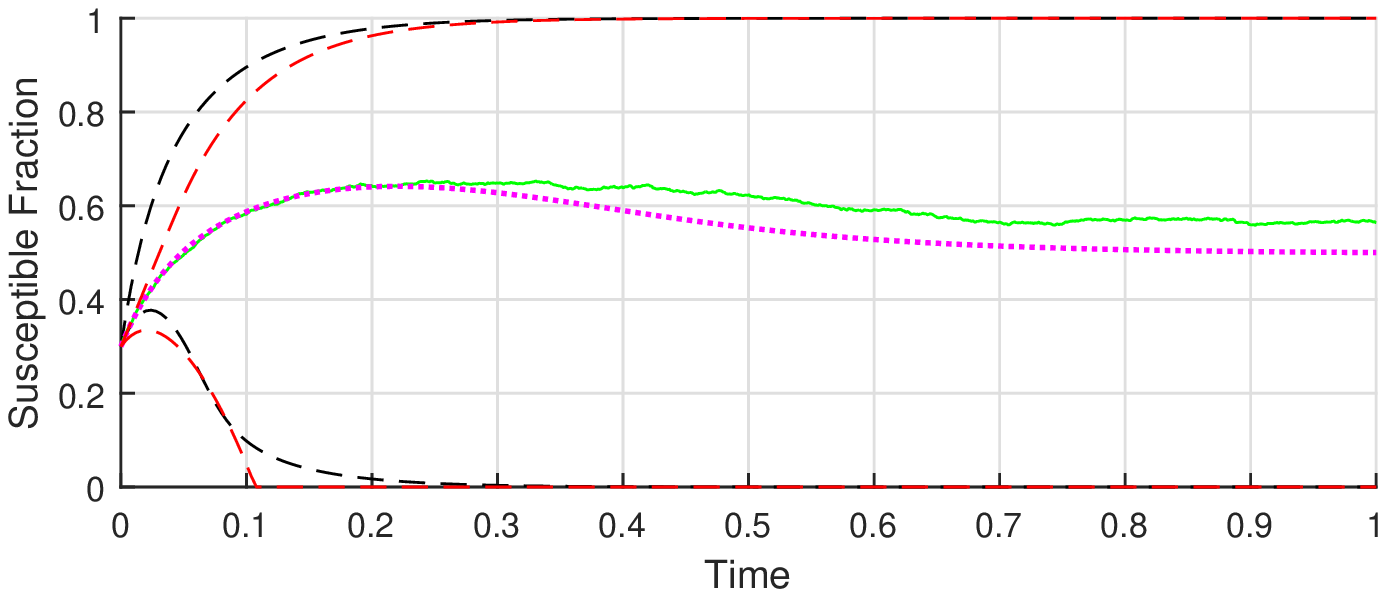}
		\caption{Expected Susceptible Fraction}
	\end{subfigure}
	~
	\begin{subfigure}[b]{0.5\textwidth}
		\includegraphics[width =\textwidth]{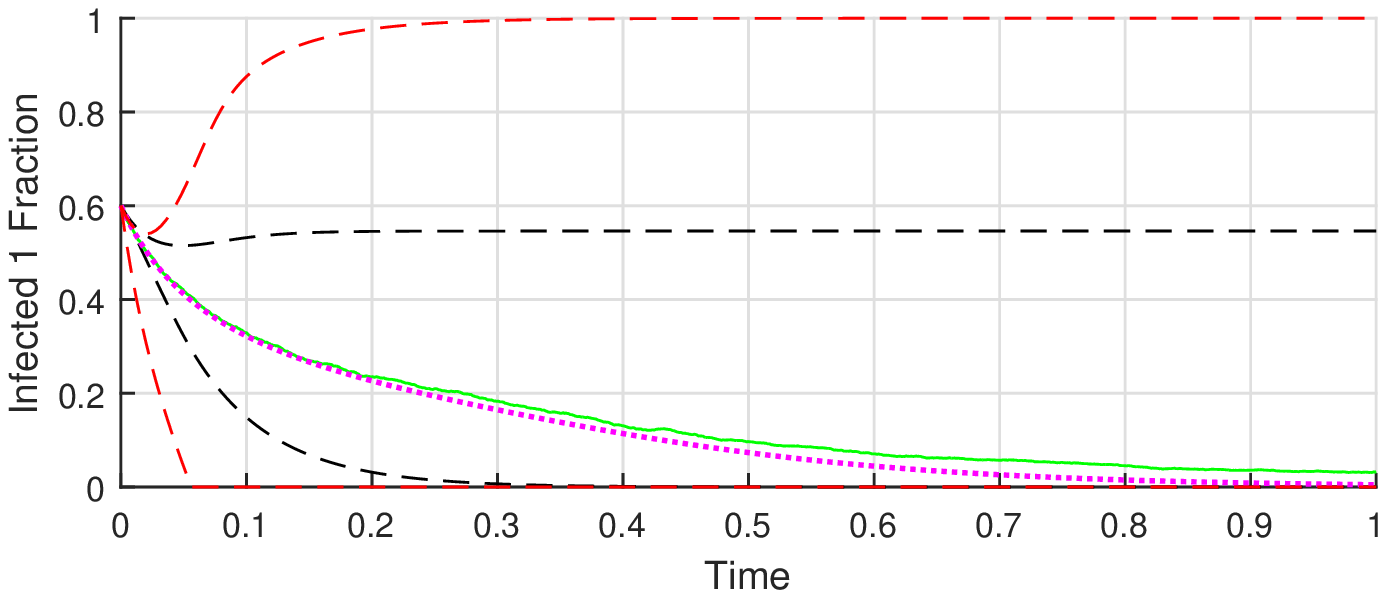}
		\caption{Expected Infected 1 Fraction}
	\end{subfigure}
	~
	\begin{subfigure}[b]{0.5\textwidth}
		\includegraphics[width =\textwidth]{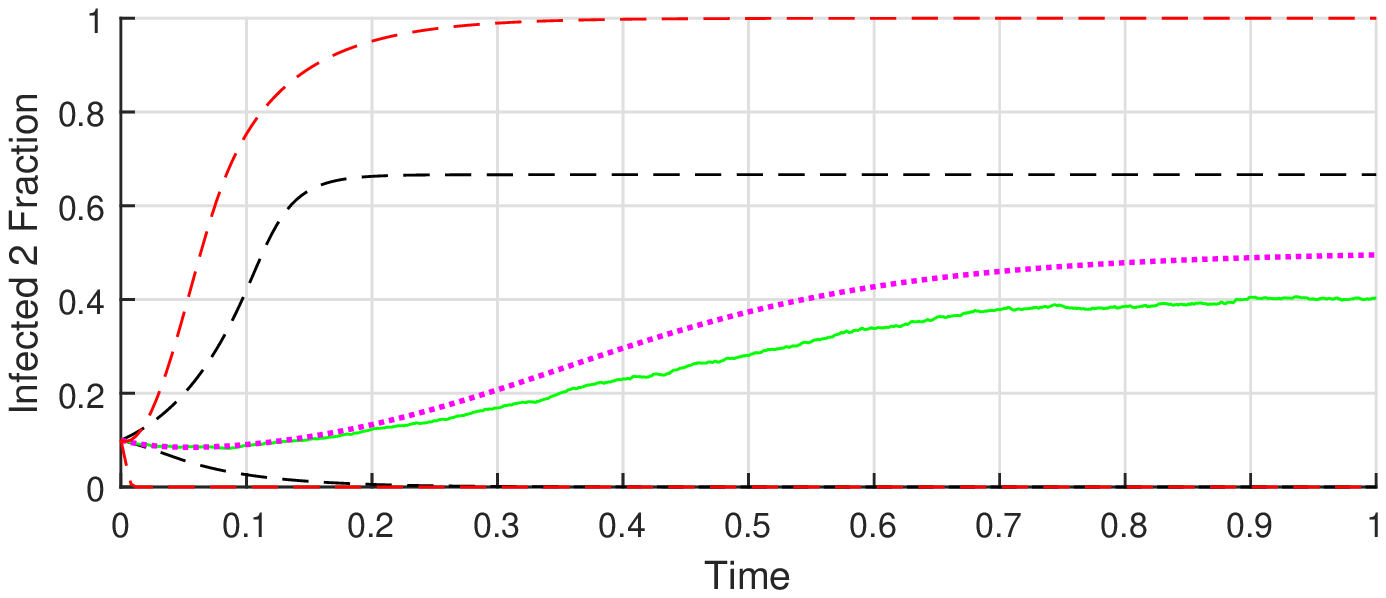}
		\caption{Expected Infected 2 Fraction}
	\end{subfigure}
	\caption{Simulated Dynamics for the $S I_1 S I_2 S$ Process on a 100-node random graph with connection probability p = 0.2 with the mean-field approximation of $I_1$ decaying to $0$ exponentially fast.  The black dashed lines are the trajectories of the bounding system \eqref{eq:bounding}; the red dashed lines plotter are the trajectories resultant from applying Corollary \ref{cor:eliminating} to the trajectories of \eqref{eq:bounding}; the magenta dotted lines are the trajectories of the mean-field approximation \eqref{eq:SISIS_mf}; the green solid line is generated from a 100-trial Monte Carlo simulation of the process.  Note that all trajectories given are the average compartmental membership probability approximations for the graph.  Note that the mean-field approximation \eqref{eq:SISIS_mf} does not systematically over or under approximate the states.} \label{fig:SISIS_sim_controlled} 
\end{figure}

Figure \ref{fig:SISIS_sim} studies case (i); Figure \ref{fig:SISIS_sim_controlled} studies case (ii).  Note that the while the trajectories of \eqref{eq:SISIS_mf} do \emph{appear} to be good approximations (heuristically), they do not systematically over- or under-estimate the simulated trajectory.  Whether or not this is acceptable in any future practice is unclear, however it \emph{does} prevent a designer's ability to make guarantees about system performance, and as such it may be useful to consider the behavior of a bounding system as an alternative to mean-field approximation.  In particular, the control of the mean-field approximation of $S I_1 S I_2 S$ does not imply control of the stochastic process.

\subsection{$SEIV$} \label{subsec:SEIV}
The Susceptible-Exposed-Infected-Vigilant ($SEIV$) process (pictured in Figure \ref{fig:SEIV}; see, e.g., \cite{CN-VMP-GJP:15-TCNS}) model has $|\comps| = 4$, where the possible compartments are `Susceptible,' ($S$), `Infected,' ($I$), `Exposed,' ($E$) and `Vigilant,' ($V$).  The transitions $\{S \goesto V\}$, $\{V \goesto S\}$, $\{E \goesto I\}$, $\{I \goesto V\}$ are specified as internal.  The transition $\{S \goesto E\}$ is an external process, with transition rate $\lambda_{i (t)}^{\{S \goesto E\}} = \sum_j \mathbbm{1}_{\{x_j(t) = E\}}\beta_{ij}^{\{E; \; S \goesto E\}} + \mathbbm{1}_{\{x_j(t) = I \}} \beta_{ij}^{\{I; \; S \goesto E\}}$.  A standard mean-field approximation can be derived by application of results in \cite{sahneh2013generalized}, and can be written as:
\begin{equation} \label{eq:SEIV_mf}
	\begin{aligned}
	\phi_i^{\{S\}} &= (1 - \phi_i^{\{E\}} - \phi_i^{\{I\}} - \phi_i^{\{V\}}),\\
	\dot{\phi}_i^{\{E\}} &= (1 - \phi_i^{\{E\}} - \phi_i^{\{I\}} - \phi_i^{\{V\}}) (\sum_{j \in V} \beta_{ij}^{\{E;S \goesto E\}} \phi_j^{\{E\}} +\beta_{ij}^{\{I ;S \goesto E\}} \phi_j^{\{I\}})- \phi_i^{\{E\}} \delta_i^{\{E \goesto I\}},\\
	\dot{\phi}_i^{\{I\}} &= \delta_i^{\{E \goesto I\}} \phi_i^{\{E\}} - \delta_i^{\{I \goesto V\}} \phi_i^{\{I\}},\\
	\dot{\phi}_i^{\{V\}} &= \delta_i^{\{I \goesto V\}} \phi_i^{\{I\}} + \delta_i^{\{S \goesto V\}}(1 - \phi_i^{\{E\}} - \phi_i^{\{I\}} - \phi_i^{\{V\}}) - \delta_i^{\{V \goesto S \}} \phi_i^{\{V\}},
	\end{aligned}
\end{equation}
where $\phi_i^{\{S\}}$ is an approximation of the probability that node $i$ is a member of compartment $S$, $\phi_i^{\{E\}}$ is an approximation of the probability that node $i$ is a member of compartment $E$, $\phi_i^{\{I\}}$ is an approximation of the probability that node $i$ is a member of compartment $I$, and $\phi_i^{\{V\}}$ is an approximation of the probability that node $i$ is a member of compartment $V$.  Note that \eqref{eq:SEIV_mf} is cannot trivially be shown to be a bounding system; at the least, knowledge of non-negative (or non-positive) correlation between variables must be established before applying our results to derive a comparable set of equations.  However, some recent work \cite{CN-VMP-GJP:15-TCNS} has studied the control of the $SEIV$ process via the mean-field approximation \eqref{eq:SEIV_mf}, so it is interesting to study the behavior of \eqref{eq:SEIV_mf} in comparison to a simulation of the process and our crude bounding system \eqref{eq:bounding}.

\begin{figure}
\centering
\begin{tikzpicture}[thick,every node/.style={minimum size=.5em},decoration={border,segment length=2mm,amplitude=0.3mm,angle=90}]
				\pgfdeclarelayer{background}
				\pgfdeclarelayer{foreground}
				\pgfsetlayers{background,main,foreground}
  \node[draw,circle,fill=white, thick] (1) at (-1,1) {S};
  \node[draw,circle,fill=white, thick] (2) at (2,1) {E};
  \node[draw,circle,fill=white, thick] (3) at (2,-1) {I};
  \node[draw,circle,fill=white, thick] (4) at (-1,-1) {V};
  \node[draw=black,dashed,fit=(2) (3) ,inner sep=0.1ex,ellipse] (el1) {};
  
  \path[->] (1) edge [very thick,bend left,dashed] (2);
  \path[->] (2) edge [very thick] (3);
  \path[->] (3) edge [very thick,bend left] (4);
  \path[->] (4) edge [very thick,bend left] (1);
  \path[->] (1) edge [very thick, bend left] (4);
  

\end{tikzpicture}
	\caption{The compartmental diagram of $SEIV$. The dashed lines indicate external transitions; the solid lines indicate internal transitions.} \label{fig:SEIV}
\end{figure}

\begin{figure}[h]
	\centering
	\begin{subfigure}[b]{0.4\textwidth}
		\includegraphics[width =\textwidth]{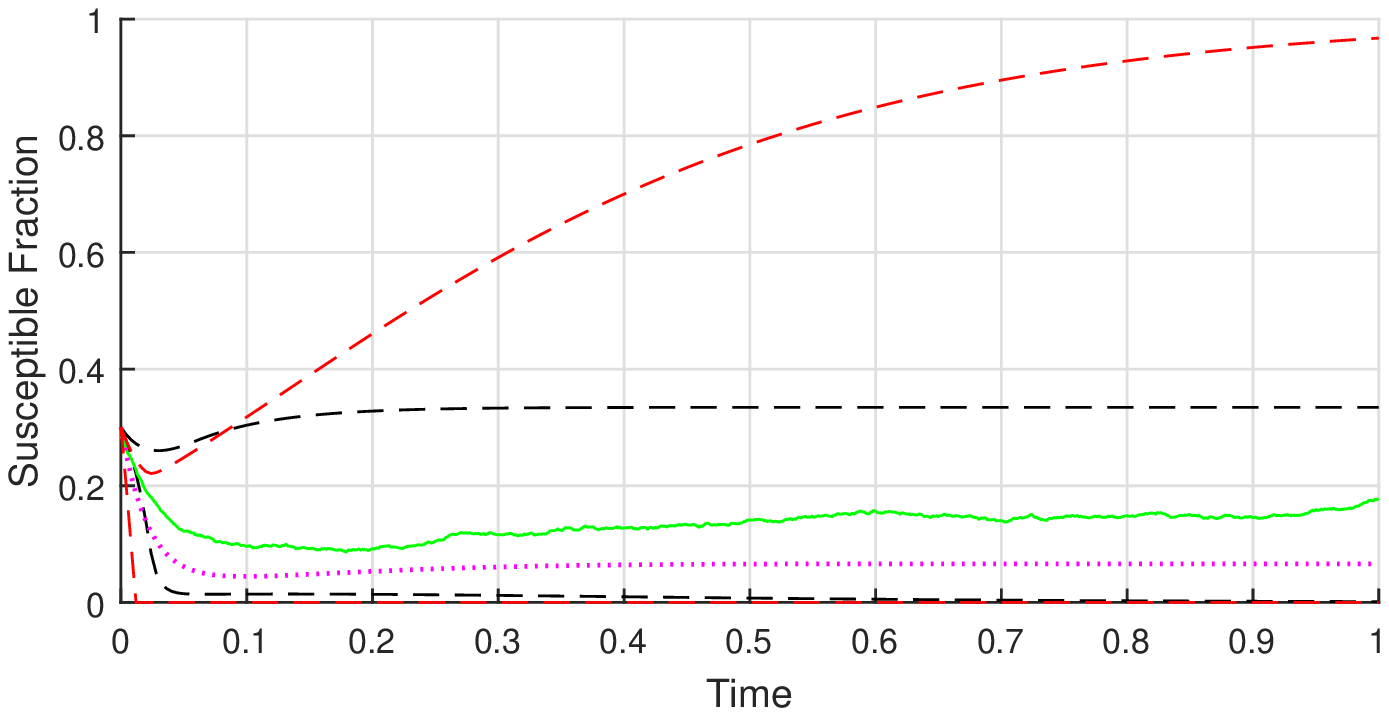}
		\caption{Expected Susceptible Fraction}
	\end{subfigure}
	~
	\begin{subfigure}[b]{0.4\textwidth}
		\includegraphics[width =\textwidth]{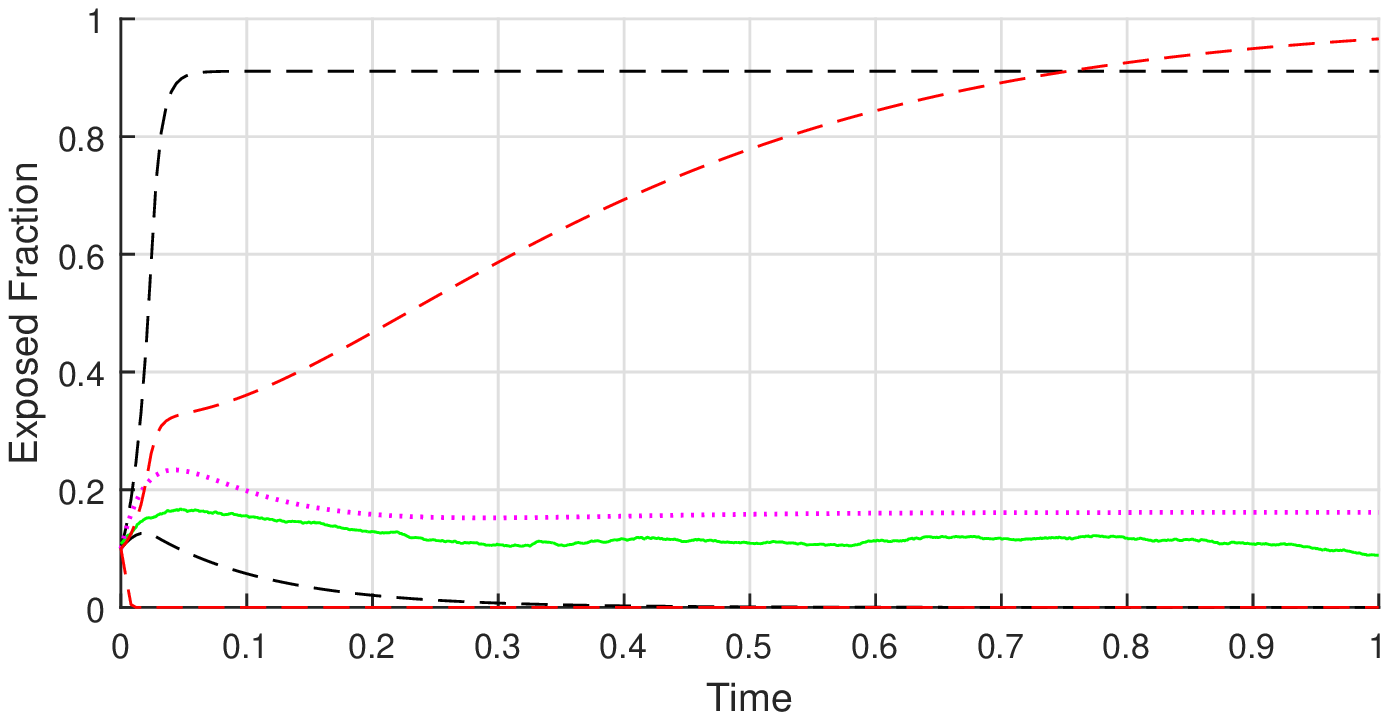}
		\caption{Expected Exposed Fraction}
	\end{subfigure}
	~
	\begin{subfigure}[b]{0.4\textwidth}
		\includegraphics[width =\textwidth]{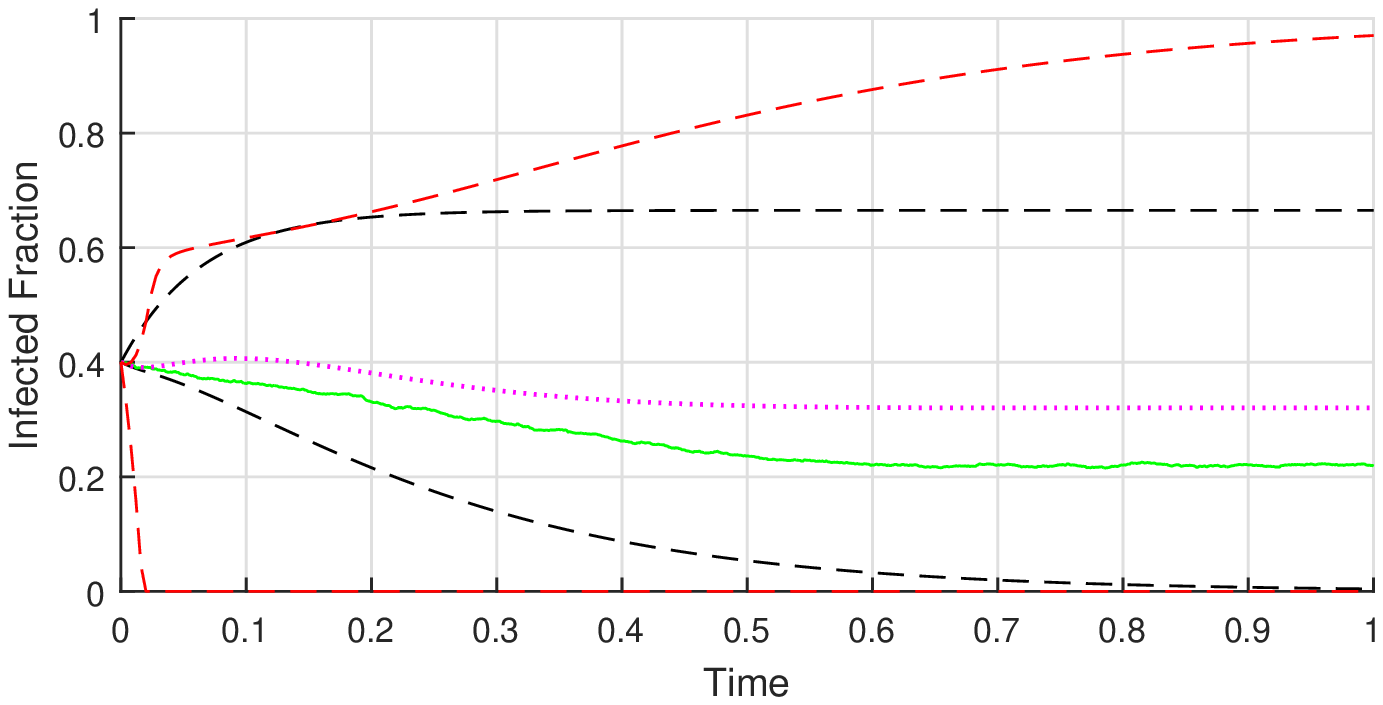}
		\caption{Expected Infected Fraction}
	\end{subfigure}
	~
	\begin{subfigure}[b]{0.4\textwidth}
		\includegraphics[width =\textwidth]{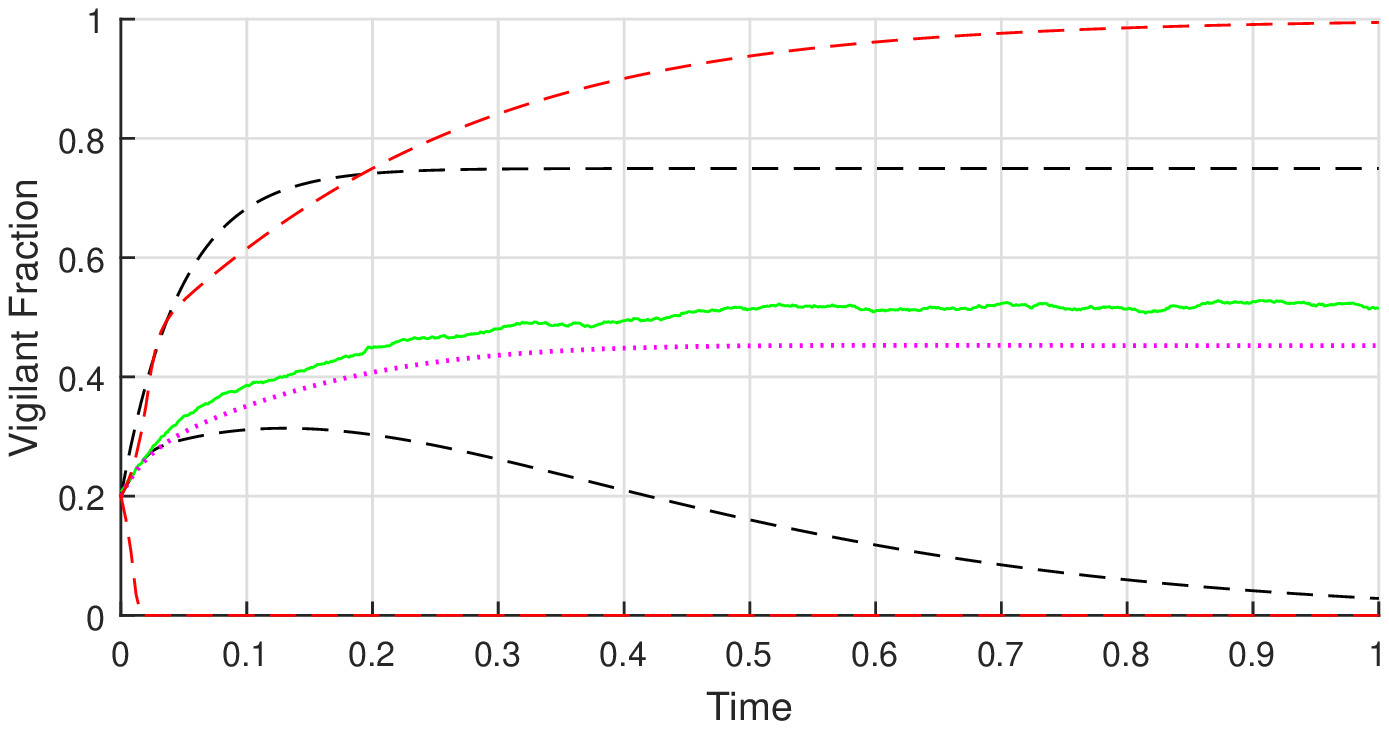}
		\caption{Expected Vigilant Fraction}
	\end{subfigure}
	\caption{Simulated Dynamics for the $S E I V$ Process on a 50-node random graph with connection probability p = 0.2.  The black dashed lines are the trajectories of the bounding system \eqref{eq:bounding}; the red dashed lines are the resulting trajectories from applying Corollary \ref{cor:eliminating} to the trajectories of \eqref{eq:bounding};the magenta dashed lines are the trajectories of the standard mean-field approximation \eqref{eq:SEIV_mf}; the green solid line is generated from a 100-trial Monte Carlo simulation of the process.  Note that all trajectories given are the average compartmental membership probability approximations for the graph.} \label{fig:SEIV_sim}
\end{figure}

\begin{figure}[h]
	\centering
	\begin{subfigure}[b]{0.4\textwidth}
		\includegraphics[width =\textwidth]{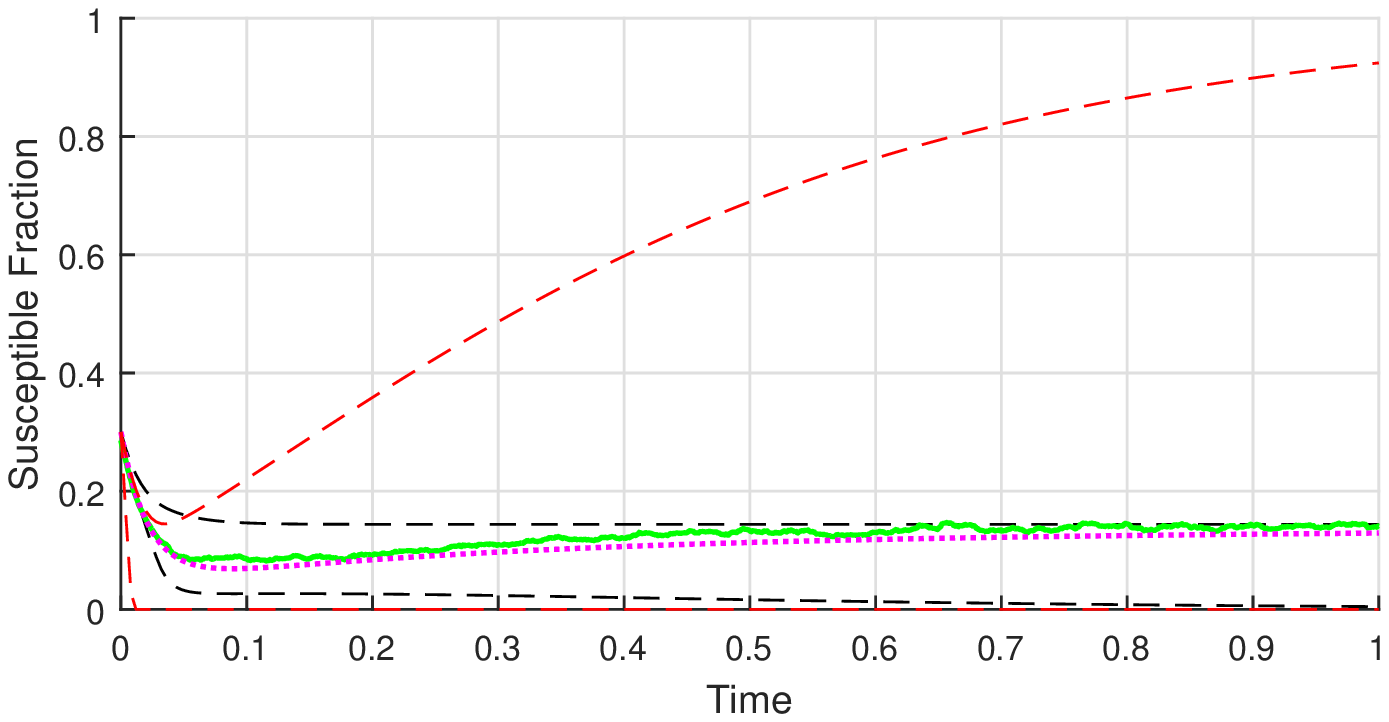}
		\caption{Expected Susceptible Fraction}
	\end{subfigure}
	~
	\begin{subfigure}[b]{0.4\textwidth}
		\includegraphics[width =\textwidth]{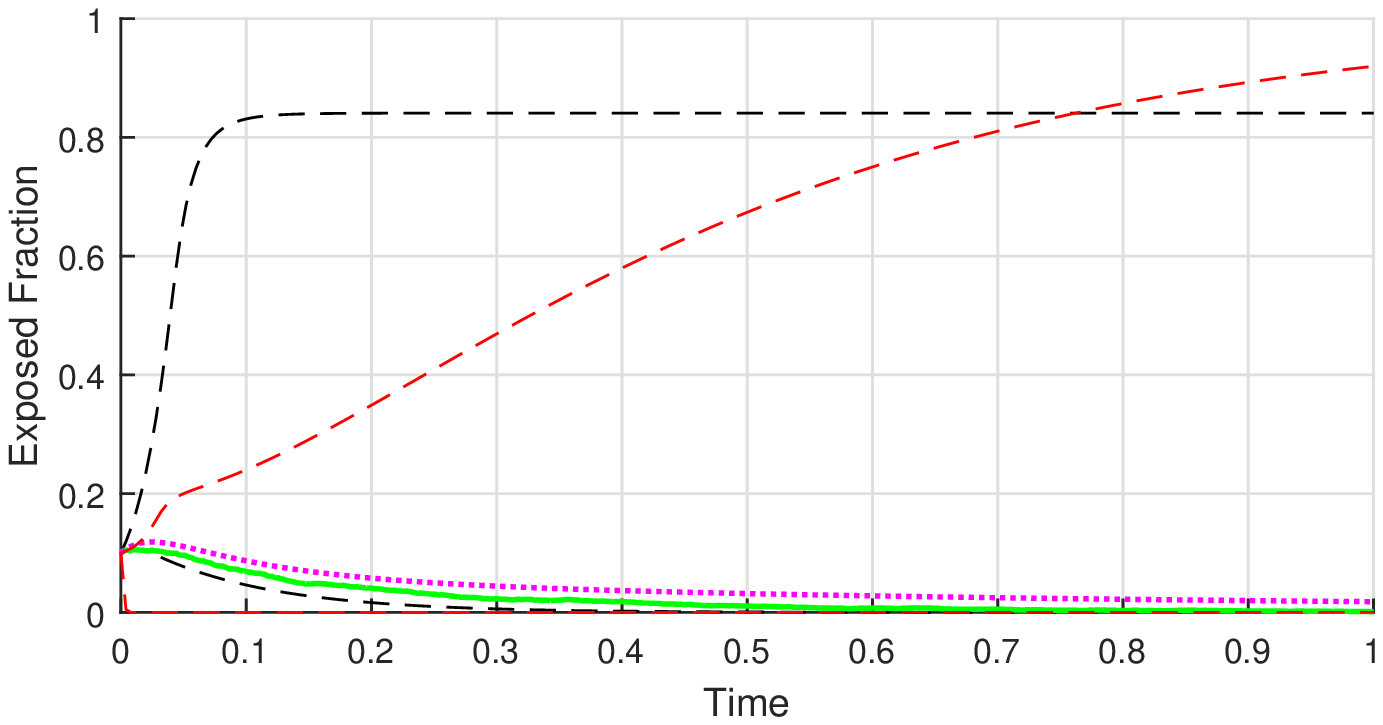}
		\caption{Expected Exposed Fraction}
	\end{subfigure}
	~
	\begin{subfigure}[b]{0.4\textwidth}
		\includegraphics[width =\textwidth]{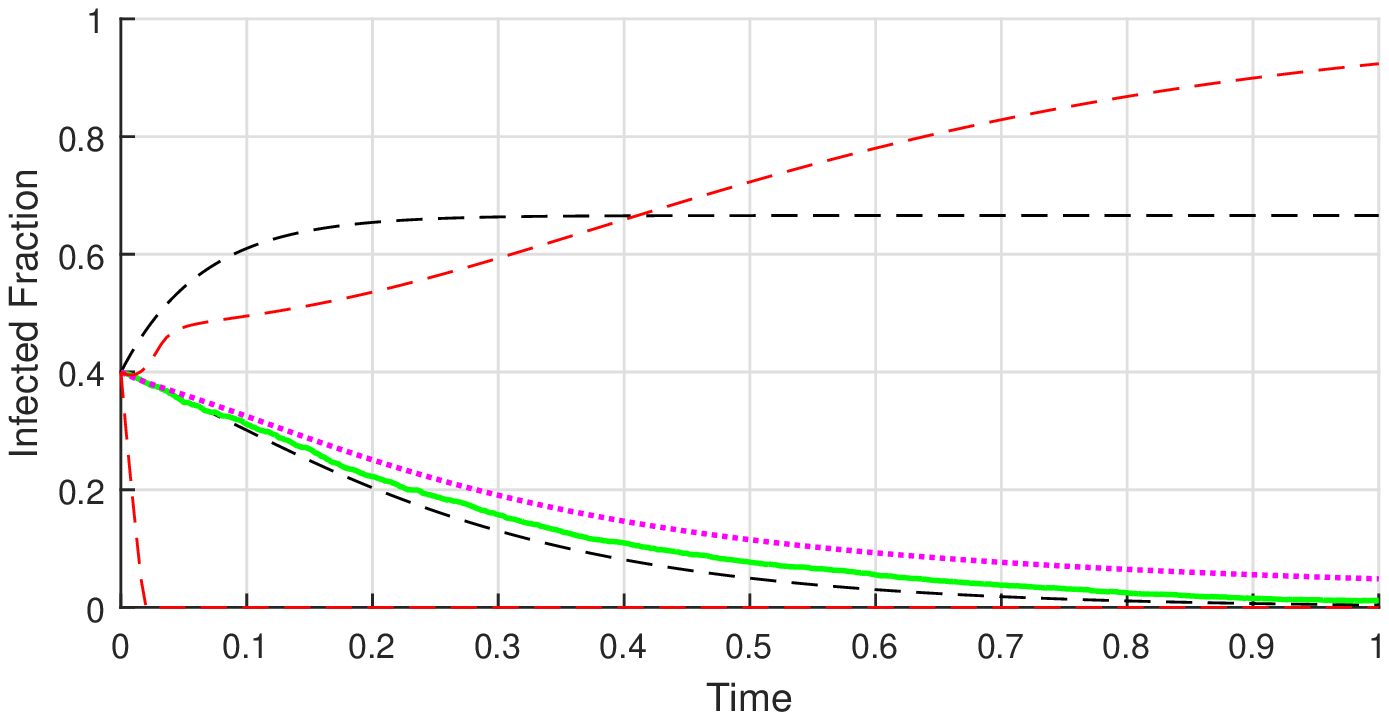}
		\caption{Expected Infected Fraction}
	\end{subfigure}
	~
	\begin{subfigure}[b]{0.4\textwidth}
		\includegraphics[width =\textwidth]{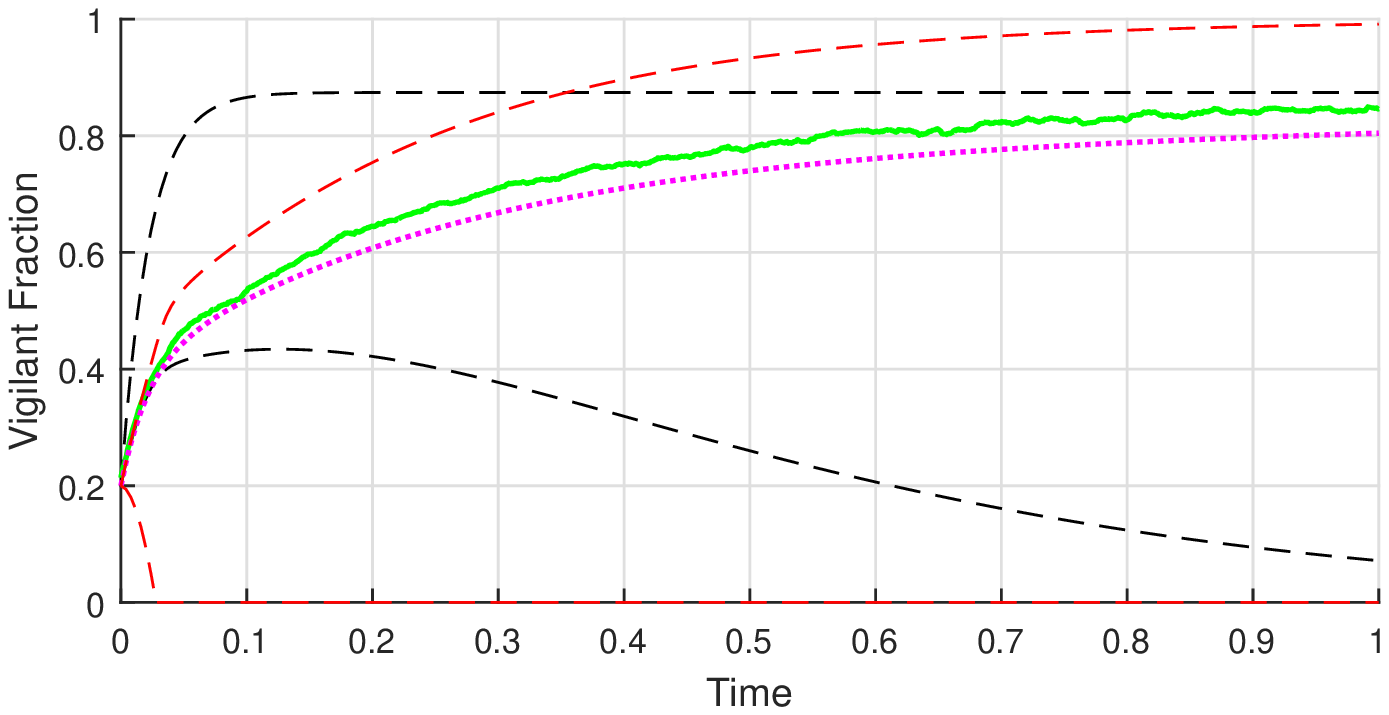}
		\caption{Expected Vigilant Fraction}
	\end{subfigure}
	\caption{Simulated Dynamics for the $S E I V$ Process on a 50-node random graph with connection probability p = 0.2.  The black dashed lines are the trajectories of the bounding system \eqref{eq:bounding}; the red dashed lines are the trajectories resulting from an application of Corollary \ref{cor:eliminating} to the trajectories of \eqref{eq:bounding}; the magenta dotted lines are the trajectories of the mean-field approximation \eqref{eq:SEIV_mf}; the green solid line is generated from a 100-trial Monte Carlo simulation of the process.  Note that all trajectories given are the average compartmental membership probability approximations for the graph.} \label{fig:SEIV_sim_controlled}
\end{figure}

We again distinguish two cases of interest: (i) both the mean-field approximations $\phi_i^{\{E\}}$ and $\phi_i^{\{I\}}$ exist in an endemic state, and (ii) the rates are chosen such that the mean-field approximations of $\phi_i^{\{E\}}$ and $\phi_i^{\{I\}}$ exponentially decay to $0$ (as was studied in \cite{CN-VMP-GJP:15-TCNS}).  Case (i) is studied in Figure \ref{fig:SEIV_sim}; Case (ii) is studied in Figure \ref{fig:SEIV_sim_controlled}.  

Here again, it is interesting to note that the mean-field dynamics \eqref{eq:SEIV_mf} are a tighter approximation to the simulated trajectory of the process than any of the trajectories of the bounding system, however they do \emph{not} systematically over- or under- approximate the simulated trajectory.  However, the simulations exhibit the property that the probabilities of membership in compartments $E$ and $I$ are over-approximated, whereas the probabilities of membership in compartments $S$ and $V$ are under-approximated, although this is a heuristic observation.  This suggests that if a method for proving non-negative or non-positive correlation between process variables is developed, we may be able to construct a bounding system which behaves much like the mean-field approximation.

We note that application of Corollary \ref{cor:eliminating} exhibits a substantial benefit in the simulation of $SEIV$.  This can be observed by examining the red dashed trajectories (those generated by the application of Corollary \ref{cor:eliminating}) of the probabilities of membership in the ``exposed,'' compartment, which provide a substantial improvement over the black dashed trajectories (those generated by the dynamics \eqref{eq:bounding}).

\section{Summary and Future Work} \label{sec:summary}
Summarized most abstractly, we have shown a method constructing systems which generate trajectories that systematically over- and under-approximate the moments of stochastic processes with moment dynamics involving the expectation of a product of indicator random variables.  Though widely applicable in the analysis of chemical, biological and engineered systems, we have focused our discussion on compartmental spreading processes.  We have shown that a crude, but non-trivial, approximating system always exists for a general class of compartmental spreading process.  We have shown that the crude construction can be improved when knowledge of non-negative or non-positive correlation between pairs of process variables can be shown \emph{a priori}.  

In the case that all process transitions are internal, we recover the exact moment dynamics of the process.  In the cases of $SIS$ and $SIR$, our construction leads to a recovery of the standard mean-field approximation developed in \cite{sahneh2013generalized} when we apply knowledge of non-negative correlation between infection compartment variables.  In the cases of $S I_1 S I_2 S$ and $SEIV$, we see that our crude system works as expected, but the mean-field approximations appear to agree more closely with the simulation of the stochastic process.

The problem of determining whether process variables have non-negative or non-positive covariance is open for general compartmental processes, and is of importance.  In the context of this work, it would allow us to construct better bounding systems of arbitrary spreading processes fitting the paradigm presented herein without performing tedious \emph{ad hoc} analyses.  Though there is work near to this theme performed for the case of $SIS$ and $SIR$ \cite{cator2014nodal}, it is unclear at the time of this writing that the arguments presented therein extend to a more general framework, such as the one presented in this paper.

Though we have not discussed it here, dynamic equations for higher order moments can be approximated by the technique presented here as well.  In the case of compartmental spreading processes, the second-order moment dynamics generally involve the expectation of products of three indicators, and we can use the presented bounding techniques almost directly.  By studying bounding systems of the first and second moments of a process, we can develop rigorous concentration arguments by applying Markov's inequality, which would allow for a rigorous mechanism of forecasting, however we leave further development to future work.
\section*{Acknowledgments}
This work was supported in part by the National Science Foundation grant CNS-1302222 “NeTS: Medium: Collaborative Research: Optimal Communication for Faster Sensor Network Coordination”, IIS-1447470 “BIGDATA: Spectral Analysis and Control of Evolving Large Scale Networks”, and the TerraSwarm Research Center, one of six centers supported by the STARnet phase of the Focus Center Research Program (FCRP), a Semiconductor Research Corporation program sponsored by MARCO and DARPA.

\appendix
\subsection{Construction of Exact Moment Dynamics} \label{app:construction}
From first principles, we derive a system of equations which describes the evolution of the probability that the state of node $i$ at time $t$ (denoted $x_i(t)$) takes the value $c$ as follows:

\begin{equation}
	\begin{aligned}
	\frac{d \mathbb{E} \left[ \mathbbm{1}_{\{x_{i}(t) = \{c\} \}} \right]}{d t} 
	&= \lim_{h \downarrow 0} \frac{\mathbb{E} \left[ \mathbbm{1}_{\{x_{i}(t + h) = c\}} \right] - 
	\mathbb{E} \left[ \mathbbm{1}_{\{x_{i}(t) = c \}} \right]}{h} \\
	&=  \lim_{h \downarrow 0} \frac{\mathbb{E} 
	\left[\sum_{c^\prime \in \comps \setminus \{c\} } \mathbbm{1}_{\{x_{i}(t + h) = c,\; x_{i}(t) = c^\prime \}} \right] 
	+ \mathbb{E} \left[ \mathbbm{1}_{\{x_{i}(t+h) = c, \; x_{i}(t) = c \}} \right]
	- \mathbb{E} \left[ \mathbbm{1}_{\{x_{i}(t) = c \}} \right]}{h}\\
	&= \lim_{h \downarrow 0} \frac{\mathbb{E} 
	\left[\sum_{c^\prime \in \comps \setminus \{c\} } \mathbbm{1}_{\{x_{i}(t + h) = c,\; x_{i}(t) = c^\prime \}} \right] 
	- \mathbb{E} \left[ \mathbbm{1}_{\{x_{i}(t+h) \in \comps \setminus \{c\}, \; x_{i}(t) = c \}} \right]}{h}\\
	&= \lim_{h \downarrow 0} \frac{\mathbb{E} 
	\left[\sum_{c^\prime \in \comps \setminus \{c\} } \mathbbm{1}_{\{x_{i}(t) = c^\prime\}} \rateit{c^\prime \rightarrow c} \right] h
	- \mathbb{E} \left[\sum_{c^\prime \in \comps \setminus \{c\}} \mathbbm{1}_{\{x_{i}(t) = c \}} \rateit{c \rightarrow c^\prime} \right] h + h O(h)}{h}\\
	&= \mathbb{E} 
		 \left[ \sum_{c^\prime \in \comps \setminus \{c\} } \mathbbm{1}_{\{x_{i}(t) = c^\prime\}} \rateit{c^\prime \rightarrow c} \right]
		- \mathbb{E} \left[ \sum_{c^\prime \in \comps \setminus \{c\}} \mathbbm{1}_{\{x_{i}(t) = c \}} \rateit{c \rightarrow c^\prime} \right],
	\end{aligned}
\end{equation}

where we have made the assumption that $$| \mathbb{E}[\sum_{c^\prime \in \comps \setminus \{c\}} \mathbbm{1}_{\{x_i(t+h) = c,x_i(t) = c^\prime \}}] - \mathbb{E}[\sum_{c^\prime \in \comps \setminus \{c\}} \mathbbm{1}_{\{x_i(t) = c^\prime \}} \rateit{c^\prime \goesto c} h] | \leq h O(h),$$ which holds true under mild assumptions, such as the condition $0 \leq \delta_i^{\{c\}} < \infty$ and $0 \leq \beta_{ij}^{\{\tilde{c}; \; c \goesto c^\prime\}} < \infty$
for all $i$, $j$, $c$ and $c^\prime$, which we have assumed in the body of the paper.  These expressions are the Kolmogorov forward equations (see, e.g., \cite{stroock2005introduction}) for the processes we are considering, where we have made a deliberate choice to represent the system from a microscopic (i.e. agent-centric) perspective.

\bibliography{research}

\begin{thebibliography}{10}

\bibitem{gillespie1992rigorous}
D.~T. Gillespie, ``A rigorous derivation of the chemical master equation,''
  {\em Physica A: Statistical Mechanics and its Applications}, vol.~188, no.~1,
  pp.~404--425, 1992.

\bibitem{singh2011approximate}
A.~Singh and J.~P. Hespanha, ``Approximate moment dynamics for chemically
  reacting systems,'' {\em Automatic Control, IEEE Transactions on}, vol.~56,
  no.~2, pp.~414--418, 2011.

\bibitem{singh2006moment}
A.~Singh and J.~P. Hespanha, ``Moment closure techniques for stochastic models
  in population biology,'' in {\em American Control Conference, 2006},
  pp.~6--pp, IEEE, 2006.

\bibitem{soltani2014moment}
M.~Soltani, C.~Vargas, N.~Kumar, R.~Kulkarni, and A.~Singh, ``Moment closure
  approximations in a genetic negative feedback circuit,'' {\em arXiv preprint
  arXiv:1405.3958}, 2014.

\bibitem{sahneh2013generalized}
F.~D. Sahneh, C.~Scoglio, and P.~Van~Mieghem, ``Generalized epidemic mean-field
  model for spreading processes over multilayer complex networks,'' {\em
  IEEE/ACM Transactions on Networking (TON)}, vol.~21, no.~5, pp.~1609--1620,
  2013.

\bibitem{van2009virus}
P.~Van~Mieghem, J.~Omic, and R.~Kooij, ``Virus spread in networks,'' {\em
  Networking, IEEE/ACM Transactions on}, vol.~17, no.~1, pp.~1--14, 2009.

\bibitem{munsky2006finite}
B.~Munsky and M.~Khammash, ``The finite state projection algorithm for the
  solution of the chemical master equation,'' {\em The Journal of chemical
  physics}, vol.~124, no.~4, p.~044104, 2006.

\bibitem{munsky2008finite}
B.~Munsky and M.~Khammash, ``The finite state projection approach for the
  analysis of stochastic noise in gene networks,'' {\em Automatic Control, IEEE
  Transactions on}, vol.~53, no.~Special Issue, pp.~201--214, 2008.

\bibitem{preciado2014optimal}
V.~M. Preciado, M.~Zargham, C.~Enyioha, A.~Jadbabaie, and G.~J. Pappas,
  ``Optimal resource allocation for network protection against spreading
  processes,'' {\em Control of Network Systems, IEEE Transactions on}, vol.~1,
  no.~1, pp.~99--108, 2014.

\bibitem{CN-VMP-GJP:15-TCNS}
C.~Nowzari, V.~M. Preciado, and G.~J. Pappas, ``Optimal resource allocation in
  generalized epidemic models,'' {\em IEEE Transactions on Control of Network
  Systems}, 2015.
\newblock Submitted.

\bibitem{watkins2015optimal}
N.~J. Watkins, C.~Nowzari, V.~M. Preciado, and G.~J. Pappas, ``Optimal resource
  allocation for competing epidemics over arbitrary networks,'' in {\em
  American Control Conference}, 2015.

\bibitem{van2014exact}
P.~Van~Mieghem, ``Exact markovian sir and sis epidemics on networks and an
  upper bound for the epidemic threshold,'' {\em arXiv preprint
  arXiv:1402.1731}, 2014.

\bibitem{cator2014nodal}
E.~Cator and P.~Van~Mieghem, ``Nodal infection in markovian
  susceptible-infected-susceptible and susceptible-infected-removed epidemics
  on networks are non-negatively correlated,'' {\em Physical Review E},
  vol.~89, no.~5, p.~052802, 2014.

\bibitem{stroock2005introduction}
D.~W. Stroock, {\em An introduction to Markov processes}, vol.~230.
\newblock Springer Science \& Business Media, 2005.

\bibitem{van2015accuracy}
P.~Van~Mieghem and R.~Van~de Bovenkamp, ``Accuracy criterion for the mean-field
  approximation in susceptible-infected-susceptible epidemics on networks,''
  {\em Physical Review E}, vol.~91, no.~3, p.~032812, 2015.

\bibitem{cator2012second}
E.~Cator and P.~Van~Mieghem, ``Second-order mean-field
  susceptible-infected-susceptible epidemic threshold,'' {\em Physical review
  E}, vol.~85, no.~5, p.~056111, 2012.

\bibitem{naasell2003extension}
I.~N{\aa}sell, ``An extension of the moment closure method,'' {\em Theoretical
  population biology}, vol.~64, no.~2, pp.~233--239, 2003.

\bibitem{tankov2011improved}
P.~Tankov, ``Improved fr{\'e}chet bounds and model-free pricing of multi-asset
  options,'' {\em Journal of Applied Probability}, pp.~389--403, 2011.

\bibitem{keeling2005networks}
M.~J. Keeling and K.~T. Eames, ``Networks and epidemic models,'' {\em Journal
  of the Royal Society Interface}, vol.~2, no.~4, pp.~295--307, 2005.

\bibitem{sahneh2014competitive}
F.~D. Sahneh and C.~Scoglio, ``Competitive epidemic spreading over arbitrary
  multilayer networks,'' {\em Physical Review E}, vol.~89, no.~6, p.~062817,
  2014.

\end{thebibliography}
\bibliographystyle{ieeetr}
\end{document}